\newtheorem{thm}{Theorem}[section]
\newtheorem{lem}[thm]{Lemma}
\newtheorem{prop}[thm]{Proposition}
\theoremstyle{definition}
\newtheorem{defn}[thm]{Definition}
\theoremstyle{remark}
\newtheorem{rem}[thm]{Remark}
\numberwithin{equation}{section}
\newcommand{\eps}{\varepsilon}
\newcommand{\lsm}{\lesssim}
\newcommand{\C}{{\mathbb{C}}}
\newcommand{\R}{{\mathbb{R}}}
\let\Im=\undefined\DeclareMathOperator{\Im}{Im}
\newcounter{smalllist}
\newenvironment{SL}{\begin{list}{{$($\roman{smalllist}\/$)$\hss}}{%
\setlength{\topsep}{0mm}\setlength{\parsep}{0mm}\setlength{\itemsep}{0mm}%
\setlength{\labelwidth}{2.0em}\setlength{\itemindent}{2.5em}\setlength{\leftmargin}{0em}\usecounter{smalllist}%
}}{\end{list}}
\newcommand{\bit}{\noindent$\bullet$ }
\title[The focusing mass-critical NLS]{Characterization of minimal-mass blowup solutions to the focusing mass-critical NLS}
\author{Rowan Killip}
\address{University of California, Los Angeles, and Institute for Advanced Study, Princeton, NJ}
\author{Dong Li}
\address{Institute for Advanced Study, Princeton, NJ}
\author{Monica Visan}
\address{Institute for Advanced Study, Princeton, NJ}
\author{Xiaoyi Zhang}
\address{Academy of Mathematics and System Sciences, Beijing, and Institute for Advanced Study, Princeton, NJ}
\subjclass[2000]{35Q55}
\begin{document}

\maketitle

\begin{abstract}
Let $d\geq 4$ and let $u$ be a global solution to the focusing mass-critical nonlinear Schr\"odinger equation
$iu_t+\Delta u=-|u|^{\frac 4d}u$ with spherically symmetric $H_x^1$ initial data and mass equal to that of the ground state $Q$.
We prove that if $u$ does not scatter then, up to phase rotation and scaling, $u$ is the solitary wave $e^{it}Q$.
Combining this result with that of Merle \cite{merle2}, we obtain that in dimensions $d\geq 4$, the only spherically symmetric
minimal-mass non-scattering solutions are, up to phase rotation and scaling, the pseudo-conformal ground state and the ground state solitary wave.
\end{abstract}

\section{Introduction}

We consider the focusing mass-critical nonlinear Schr\"odinger equation
\begin{equation}\label{nls}
iu_t+\Delta u=-|u|^{\frac 4d}u
\end{equation}
in dimensions $d\geq 4$; here $u(t,x)$ is a complex-valued function on $\mathbb R\times \R^d$.

The name ``mass-critical'' refers to the fact that the scaling symmetry
\begin{equation}\label{scaling}
u(t,x)\mapsto u_{\lambda}(t,x)=\lambda^{\frac
d2}u(\lambda^{2}t,\lambda x)
\end{equation}
leaves both the equation and the mass invariant. The mass of a solution is defined as
\begin{equation}\label{mass}
M(u(t)):=\int_{\mathbb R^d}|u(t,x)|^2 \, dx,
\end{equation}
and is conserved under the flow (see Theorem~\ref{T:local} below).

In this paper, we investigate the Cauchy problem for \eqref{nls} with spherically symmetric initial data.
Before describing our results, we review some background material.  We begin by making the notion of a solution more precise:

\begin{defn}[Solution]\label{D:solution}
A function $u: I \times \R^d \to \C$ on a non-empty time interval $I \subset \R$ (possibly infinite or semi-infinite)
is a \emph{strong $L^2_x(\R^d)$ solution} (or \emph{solution} for short) to \eqref{nls} if it lies in the class
$C^0_t L^2_x(K \times \R^d) \cap L^{2(d+2)/d}_{t,x}(K \times \R^d)$ for all compact $K \subset I$  and obeys the Duhamel formula
\begin{align}\label{old duhamel}
u(t_1) = e^{i(t_1-t_0)\Delta} u(t_0) + i \int_{t_0}^{t_1} e^{i(t_1-t)\Delta} \bigl(|u|^{\frac 4d}u\bigr)(t)\, dt
\end{align}
for all $t_0, t_1 \in I$.  We refer to the interval $I$ as the \emph{lifespan} of $u$. We say that $u$ is a \emph{maximal-lifespan solution}
if the solution cannot be extended to any strictly larger interval. We say that $u$ is a \emph{global solution} if $I = \R$.
\end{defn}

The condition that $u$ belongs to $L_{t,x}^{2(d+2)/d}$ locally in time is natural for several reasons.
From the Strichartz estimate (see Lemma \ref{L:strichartz}), we see that solutions to the linear equation lie in this space.
Moreover, the existence of solutions that belong to this space is guaranteed by the local theory (see Theorem~\ref{T:local} below).
This condition is also necessary in order to ensure uniqueness of solutions.
Solutions to \eqref{nls} in this class have been intensively studied; see, for example,
\cite{BegoutVargas, cwI, hmidi-keraani, keraani, ktv:2d, kvz:blowup, merle1, merle2, compact, weinstein, weinstein:charact}
and the many references within.

Associated to our notion of solution is a corresponding notion of blowup.  As demonstrated by Theorem \ref{T:local} below,
this corresponds precisely to the impossibility of continuing the solution or to the absence of scattering:

\begin{defn}[Blowup]\label{D:blowup}
We say that a solution $u$ to \eqref{nls} \emph{blows up forward in time} if there exists a time $t_0 \in I$ such that
$$ \int_{t_0}^{\sup I} \int_{\R^d} |u(t,x)|^{2(d+2)/d}\, dx \,dt = \infty$$
and that $u$ \emph{blows up backward in time} if there exists a time $t_0 \in I$ such that
$$ \int_{\inf I}^{ t_0} \int_{\R^d} |u(t,x)|^{2(d+2)/d}\, dx\, dt = \infty.$$
We emphasize once again that with this terminology, a solution blows up if and only if it fails to scatter.  This could result, for example,
from the divergence of the kinetic energy (as must occur for a finite-time blowup solution with $H^1_x$ data) or from soliton-like behaviour.
\end{defn}

The local theory for \eqref{nls} was worked out by Cazenave and Weissler \cite{cwI}; we record their results below:

\begin{thm}[Local wellposedess, \cite{cwI, caz}] \label{T:local}
Given $u_0\in L_x^2(\R^d)$ and $t_0\in \R$, there exists a unique maximal-lifespan solution $u$ to \eqref{nls} with $u(t_0)=u_0$.
Let $I$ denote the maximal-lifespan.  Then,

\bit (Local existence) $I$ is an open neighborhood of $t_0$.

\bit (Mass conservation) The solution obeys $M(u(t))=M(u_0)$.

\bit (Blowup criterion) If\/ $\sup I$ or $\inf I$ are finite, then $u$ blows up in the corresponding time direction.

\bit (Continuous dependence) The map that takes initial data to the corresponding strong solution is uniformly continuous
on compact time intervals for bounded sets of initial data.

\bit (Scattering) If\/ $\sup I=\infty$ and $u$ does not blow up forward in time, then $u$ scatters forward in time,
that is, there exists a unique $u_+\in L_x^2(\R^d)$ such that
\begin{align}\label{like u+}
\lim_{t\to\infty}\|u(t)-e^{it\Delta}u_+\|_2= 0.
\end{align}
Conversely, given $u_+ \in L^2_x(\R^d)$ there is a unique solution to \eqref{nls} in a neighbourhood of infinity
so that \eqref{like u+} holds.  Analogous statements hold in the negative time direction.

\bit (Small data global existence) If $M(u_0)$ is sufficiently small depending on $d$, then $u$ is a global solution
with finite $L_{t,x}^{2(d+2)/d}$-norm.
\end{thm}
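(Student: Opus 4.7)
The plan is to reduce everything to a single contraction mapping argument for the Duhamel map in the Strichartz space $L_{t,x}^{2(d+2)/d}$, which is the space dictated by the $L^2_x$-criticality of the equation. Concretely, one introduces
\begin{equation*}
\Phi(u)(t) := e^{i(t-t_0)\Delta}u_0 + i\int_{t_0}^{t} e^{i(t-s)\Delta}\bigl(|u|^{\frac 4d}u\bigr)(s)\,ds,
\end{equation*}
and, using Strichartz on an interval $I\ni t_0$, derives
\begin{equation*}
\|\Phi(u)\|_{L^{2(d+2)/d}_{t,x}(I\times\R^d)} \lesssim \|e^{i(t-t_0)\Delta}u_0\|_{L^{2(d+2)/d}_{t,x}(I\times\R^d)} + \|u\|_{L^{2(d+2)/d}_{t,x}(I\times\R^d)}^{1+\frac 4d},
\end{equation*}
together with a parallel Lipschitz bound for $\Phi(u)-\Phi(v)$. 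Because Strichartz also gives $\|e^{i(t-t_0)\Delta}u_0\|_{L^{2(d+2)/d}_{t,x}(\R\times\R^d)}\lesssim\|u_0\|_{L^2_x}$, dominated convergence lets me shrink $I$ about $t_0$ until the free evolution has arbitrarily small Strichartz norm, so $\Phi$ contracts on a small ball. This produces a unique fixed point and, by gluing, a unique maximal-lifespan solution on some open interval $I$.

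From this backbone the listed properties follow almost mechanically. Small-data global existence is the same argument performed on $I=\R$, where the Strichartz norm of $e^{it\Delta}u_0$ is small automatically as soon as $\|u_0\|_{L^2_x}$ is small. The blowup criterion is proved by contraposition: if $\sup I<\infty$ but the Strichartz norm of $u$ were finite on $[t_0,\sup I)$, then the inhomogeneous Strichartz estimate would make $\{u(t)\}_{t\uparrow\sup I}$ Cauchy in $L^2_x$, producing a valid initial condition from which the local theory extends $u$ past $\sup I$, violating maximality. Continuous dependence is read off from the contraction estimates on short subintervals and then iterated across the lifespan. Mass conservation, which is not visible at the $L^2_x$ level directly, is obtained by first doing the (classical) integration by parts for smooth approximating data and then passing to the limit by the continuous-dependence bound.

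The scattering statement is again a Strichartz computation: if $u$ does not blow up forward, then $\|u\|_{L^{2(d+2)/d}_{t,x}([t_0,\infty)\times\R^d)}<\infty$, and plugging this into Duhamel shows that $\{e^{-it\Delta}u(t)\}$ is Cauchy in $L^2_x$, giving the scattering state $u_+$. The converse (existence of wave operators) is proved by running the same contraction argument for the Duhamel representation about $t=+\infty$, using $e^{it\Delta}u_+$ as the initial guess; the relevant free-evolution Strichartz norm is automatically small near infinity, so the fixed point exists without any size restriction on $u_+$.

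The genuine obstacle behind the whole theorem is that the problem is \emph{critical} at this level of regularity. One cannot control the lifespan by $\|u_0\|_{L^2_x}$ alone: two data of the same mass can have wildly different profiles $e^{it\Delta}u_0$ in the Strichartz space, and it is only a quantity like $\|e^{it\Delta}u_0\|_{L^{2(d+2)/d}_{t,x}}$ that governs existence time. This is why global existence is obtained only for small mass, why blowup is characterised by divergence of a spacetime norm rather than of a pointwise quantity, and why one has to be careful to formulate continuous dependence using the Strichartz topology and to obtain mass conservation via approximation rather than on the nose.
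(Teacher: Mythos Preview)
Your outline is correct and is precisely the standard Cazenave--Weissler contraction-mapping argument in the critical Strichartz space; the paper does not supply its own proof of this theorem but simply cites \cite{cwI,caz}, which is where the argument you describe originates. There is nothing to compare.
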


By Theorem \ref{T:local}, all solutions with sufficiently small mass are global and scatter both forward and backward in time.
However, solutions with large mass may blow up; indeed, the existence of finite-time blowup solutions was proved by Glassey \cite{glassey}.
Moreover, there exist explicit examples of blowup solutions. Two typical examples are produced as follows:
Let $Q$ denote the ground state, that is, the unique positive radial Schwartz solution to the elliptic equation
\begin{equation}\label{ground state}
\Delta Q+Q^{1+\frac 4d}=Q.
\end{equation}
The existence and uniqueness of $Q$ was established by Berestycki and Lions \cite{blions} and Kwong \cite{kwong}, respectively.  Then
\begin{equation}\label{soliton}
u(t,x):=e^{it}Q(x)
\end{equation}
is a global solution to \eqref{nls}, which blows up both forward and backward in time in the sense of Definition~\ref{D:blowup}.
Moreover, by applying the pseudo-conformal transformation to $u$, we obtain another solution to \eqref{nls},
\begin{equation}\label{psu}
v(t,x):=|t-T|^{-\frac d2}e^{i\frac{|x|^2-4}{4(t-T)}}Q\bigl(\tfrac x{t-T}\bigr),
\end{equation}
which blows up at the finite time $T$.  A simple calculation shows that $M(v)=M(Q)$.

It is widely believed that up to the symmetries of \eqref{nls}, these examples are the only minimal-mass obstructions to global well-posedness
and scattering.  Recently, this has been verified in the spherically symmetric case in dimensions $d\ge 2$.

\begin{thm}[Well-posedness and scattering below $M(Q)$, \cite{ktv:2d,kvz:blowup}]\label{T:scatter}
\ Let $d\ge 2$ and let $u_0\in L_x^2$ be spherically symmetric with $M(u_0)< M(Q)$.  Then there exists a unique global
solution $u$ to \eqref{nls} with initial data $u_0$.  Moreover, $u$ scatters in both time directions.
If in addition $u_0\in H^1_x$, then $u \in L_t^\infty H^1_x$ and scattering holds in the $H^1_x$ topology.
\end{thm}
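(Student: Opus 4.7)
The plan is to argue by contradiction, following the concentration-compactness-plus-rigidity program of Kenig-Merle as adapted to the mass-critical setting. Suppose there exists a spherically symmetric $u_0 \in L^2_x$ with $M(u_0) < M(Q)$ whose corresponding solution fails to scatter, and let $m_\ast$ denote the infimum of $M(u_0)$ over such counterexamples. By Theorem \ref{T:local} we have $m_\ast > 0$, and by assumption $m_\ast < M(Q)$. The bulk of the work is to extract a spherically symmetric minimal-mass non-scattering solution with additional structural properties, and then to show that no such solution can exist.

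Extraction is carried out via a spherically symmetric $L^2_x$ profile decomposition (in the spirit of Keraani) applied to a sequence of data $u_{0,n}$ with $M(u_{0,n}) \to m_\ast$ whose solutions blow up in the sense of Definition \ref{D:blowup}. Spherical symmetry eliminates translation parameters from the decomposition, leaving only scaling. By the stability theory that accompanies Theorem \ref{T:local}, a multi-bubble decomposition would produce a non-scattering solution with mass strictly less than $m_\ast$; hence a single profile must carry all the mass. The resulting maximal-lifespan solution $u$ has $M(u) = m_\ast$, blows up in both time directions, and its orbit is precompact in $L^2_x$ modulo \eqref{scaling}: there exists $N\colon I \to (0,\infty)$ such that $\{N(t)^{-d/2} u(t, N(t)^{-1} \cdot) : t \in I\}$ is precompact in $L^2_x$.

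A standard reduction based on the scaling symmetry and a local constancy lemma for $N(t)$ then narrows the list of scenarios to three: a \emph{soliton-like} solution with $I = \R$ and $N(t) \equiv 1$; a \emph{self-similar} solution with $I = (0,\infty)$ and $N(t) = t^{-1/2}$; and a \emph{double high-to-low frequency cascade} with $I = \R$ and $N(t) \to 0$ as $t \to \pm \infty$. Each of these must be excluded. The first step in all three cases is to upgrade regularity: the compactness of the orbit and a double Duhamel bootstrap on \eqref{old duhamel}, together with radial Strichartz gains and the weighted in/out decomposition of Killip-Tao-Visan, are used to show that $u(t) \in H^1_x$ uniformly in $t$. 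Once $H^1_x$ regularity is in hand, the sharp Gagliardo-Nirenberg inequality combined with $m_\ast < M(Q)$ yields strict positivity of the energy and in particular a uniform lower bound on $\|\nabla u(t)\|_{2}$.

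With this regularity, each scenario is ruled out by a tailored rigidity argument. The soliton-like scenario is treated by a localized virial identity on balls of radius $R$, using the fact that compactness plus spherical symmetry forces the $L^2_x$ mass to be concentrated in a fixed ball; combined with the energy lower bound, this forces the virial quantity to become negative in finite time, contradicting mass conservation. The cascade scenario is excluded by exploiting $N(t) \to 0$ together with compactness to show that $\|\nabla u(t_n)\|_2 \to 0$ along a suitable sequence of times, contradicting the Gagliardo-Nirenberg lower bound. The self-similar scenario is ruled out by pushing the regularity further, using the a priori smallness in $L^{2(d+2)/d}_{t,x}$ on short time intervals together with the self-similar scaling to show $u(t) \in H^s_x$ for some $s > 1$, which is inconsistent with the finite-time blowup of $\|\nabla u(t)\|_2$ dictated by the self-similar profile. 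The once $L^2$ scattering has been established, the $H^1_x$ persistence and scattering claim in the theorem follows by a routine $H^1_x$ Strichartz bootstrap. I expect the most delicate step to be the upgrade to $H^1_x$ regularity in the radial setting, particularly in dimensions $d \geq 4$, where controlling the high-frequency tail through iterated Duhamel formulas requires a careful exploitation of the additional decay afforded by spherical symmetry.
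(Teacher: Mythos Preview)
The paper does not prove Theorem~\ref{T:scatter}; it is quoted as a background result from \cite{ktv:2d,kvz:blowup} and used as an input for the main theorem.  So there is no ``paper's own proof'' to compare against here.

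That said, your outline is a faithful sketch of the argument in the cited references: the Kenig--Merle concentration-compactness/rigidity scheme, extraction of a minimal-mass almost periodic solution via a radial profile decomposition, reduction to the three scenarios (soliton-like, self-similar, double cascade), an $H^1_x$ regularity upgrade using the in/out decomposition and weighted radial Strichartz estimates, and then scenario-by-scenario exclusion.  Two small inaccuracies: first, in the soliton-like case the truncated virial argument does not drive $V_R$ negative; rather one shows $\partial_{tt}V_R \geq c\,E(u)>0$ uniformly in $t$, which is incompatible with the trivial bound $V_R\lesssim R^2$.  Second, your expectation that the regularity upgrade is most delicate for $d\geq 4$ is backwards: the hard case is $d=2$ (treated in \cite{ktv:2d}), where the cubic nonlinearity is too weak to place $|u|^{4/d}u$ directly in a useful Lebesgue space, and one needs an iterated Duhamel/double-Duhamel argument.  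For $d\geq 3$ the argument in \cite{kvz:blowup} is comparatively cleaner.
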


We remind the reader of an important related result of Weinstein, \cite{weinstein}:  Any initial data $u_0\in H^1_x$ with $M(u_0)<M(Q)$
leads to a global solution.  Note that this holds without any symmetry assumptions.  However, since the global solution is constructed
by iterating a local existence result, one obtains no information on the long-time behaviour of the solution.  In particular, scattering is not proved;
indeed, scaling arguments suggest that scattering for $H^1_x$ solutions is as hard as for general $L_x^2$ solutions.  Let us note however,
that combining Weinstein's result with the pseudo-conformal transformation yields scattering for initial data in
$\Sigma:=\{f\in H^1_x : \, |x|f\in L_x^2\}$ with mass less than that of the ground state.

According to Theorem~\ref{T:scatter}, \eqref{soliton} and \eqref{psu} are two examples of spherically symmetric
\emph{minimal-mass blowup solutions}.  It is then natural to ask if there are any other such examples.
In this paper, we will give a negative answer to this question under some constraints (see Theorem~\ref{T:main} below).

The characterization of minimal-mass blowup solutions was initiated by Weinstein \cite{weinstein:charact}, who showed the following:
Let $u$ be an $H_x^1$ solution with minimal mass that blows up in finite time; then there exist functions $\theta(t), x(t), \lambda(t)$
so that
\begin{align}\label{weinstein's result}
\lambda(t)^{\frac d2} e^{i\theta(t)} u\bigl(t, \lambda(t)x+x(t)\bigr) \to Q \quad \text{in} \quad H^1_x
\end{align}
as $t$ approaches the blowup time.  In truth, convergence follows along any sequence of times for which the kinetic energy diverges;
for an $H_x^1$ solution that blows up in finite time, any sequence converging to the blowup time has this property.

Merle \cite{merle1, merle2} extended this result to show that if an $H_x^1$ solution with minimal mass blows up in finite time,
then it must be \emph{equal} to the pseudo-conformal ground state \eqref{psu}, up to the symmetries of the equation (that is,
phase rotation, space translation, scaling, and Galilei boosts). The proof, which was later simplified by Hmidi and Keraani
\cite{hmidi-keraani}, relies heavily on the finiteness of the blowup time. Note that by using the pseudo-conformal symmetry,
this result immediately implies that solutions belonging to $\Sigma:=\{f\in H^1_x : \, |x|f\in L_x^2\}$ that have $M(u)=M(Q)$
and blow up in infinite time must be the solitary wave \eqref{soliton}, up to the symmetries of the equation.

This leaves open the problem of characterizing general (i.e.\ non-$\Sigma$) minimal-mass $H^1_x$ solutions which fail to scatter.
In this paper, we settle this problem in dimension $d\ge 4$ in the spherically symmetric case.
We will show that, up to phase rotation and scaling, the only such solution that blows up in infinite time
is the solitary wave \eqref{soliton}.  Combining this result with \cite{merle2} leads to

\begin{thm}[Characterization of the blowup profile] \label{T:main}
Let $d\geq 4$ and let $u_0 \in H_x^1 (\mathbb R^d)$ be spherically symmetric and such that $M(u_0) = M(Q)$.
Let $u: I\times \mathbb R^d\to \C$ be the maximal-lifespan solution to \eqref{nls} with prescribed initial data
$u(t_0)=u_0$ at time $t_0\in I$.  Assume that $u$ blows up in the sense of Definition~\ref{D:blowup} in at least one time direction.
Then either $I$ is semi-infinite (with finite endpoint $T$) and
\begin{align}\label{E:ft case}
u(t,x) = e^{i\theta_0}\lambda_0^{\frac d2} |t-T|^{-\frac d2} e^{ \frac{i|x|^2}{4(t-T)} - \frac {i} {t-T}}  Q\bigl(\tfrac {\lambda_0 x}{t-T}\bigr),
\end{align}
or $I=\R$ and
\begin{align}\label{E:it case}
u(t,x)=e^{i\theta_0} e^{it} \lambda_0^{\frac d2} Q(\lambda_0 x)
\end{align}
for some parameters $\theta_0\in[0,2\pi)$ and $\lambda_0>0$.
\end{thm}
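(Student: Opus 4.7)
The plan is to separate the two alternatives in the conclusion.  The finite-time case \eqref{E:ft case} is an immediate consequence of the classification of Merle \cite{merle2}: any $H^1_x$ minimal-mass solution that blows up in finite time equals the pseudo-conformal ground state up to the full symmetry group, and the spherical symmetry of the data freezes the translation and Galilean parameters.  The genuinely new content of Theorem~\ref{T:main} is the infinite-time alternative \eqref{E:it case}, so the rest of the sketch focuses on it.

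The first step is a concentration-compactness reduction.  Using the mass-critical (Keraani / B\'egout--Vargas) profile decomposition together with the sub-threshold scattering result Theorem~\ref{T:scatter} in a standard induction-on-mass argument, any spherically symmetric minimal-mass non-scattering solution must be almost periodic modulo the symmetries of \eqref{nls}.  Radiality, which is propagated by the flow thanks to the uniqueness part of Theorem~\ref{T:local}, eliminates the translation and Galilean parameters, leaving measurable functions $N\colon \R\to(0,\infty)$ and $\theta\colon\R\to[0,2\pi)$ for which the set
\[
   \bigl\{\, N(t)^{-d/2}\, e^{-i\theta(t)}\, u\bigl(t,\, \cdot/N(t)\bigr) \, :\, t\in\R\,\bigr\}
\]
is precompact in $L^2_x(\R^d)$.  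The $H^1_x$ hypothesis then yields a uniform lower bound $N(t)\geq N_0>0$: otherwise a subsequence with $N(t_n)\to 0$ would force $u(t_n)\to 0$ in $L^2_x$, contradicting conservation of mass.

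The analytic heart of the argument is to exclude the opposite behaviour $N(t)\to\infty$.  This is where the hypothesis $d\geq 4$ enters: exploiting the radial Sobolev embedding together with a truncated (Morawetz / virial) identity, one shows that any frequency concentration at infinite time would produce a quantifiable escape of mass out of compact spatial sets, contradicting the $L^2_x$-precompactness just obtained.  Once $N(t)$ is bounded both above and below, the orbit lies in a compact subset of $H^1_x$; the sharp Weinstein--Gagliardo--Nirenberg inequality gives $E(u(t))\geq 0$ with equality only at modulated ground states, and a rigidity argument using flow-invariance of the compact attractor and the isolation of $Q$ modulo symmetries forces
\[
    u(t,x) = \lambda(t)^{d/2}\, e^{i\alpha(t)}\, Q\bigl(\lambda(t)\, x\bigr)
\]
for every $t$.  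Substituting this ansatz into \eqref{nls} and using that $Q$ solves \eqref{ground state} produces ODEs whose only solution is $\lambda\equiv\lambda_0$ and $\alpha(t)=\alpha_0+t$, yielding \eqref{E:it case}.

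The principal obstacle is this ``no pseudo-conformal blowup at infinity'' step: without it one would be left with extra families of minimal-mass non-scattering solutions beyond the two listed, and it is precisely here that the argument makes essential use of the restriction $d\geq 4$ through the radial Sobolev embedding and the localized virial identity.
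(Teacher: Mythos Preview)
Your outline misidentifies both the structure of the argument and the role of the hypothesis $d\geq 4$.  First, the claim that $N(t_n)\to 0$ would force $u(t_n)\to 0$ in $L^2_x$ is false: the $L^2_x$ norm is invariant under the scaling \eqref{scaling}, so mass conservation gives no obstruction whatsoever to $N(t)\to 0$.  The paper does \emph{not} establish a lower bound on $N(t)$; instead it splits into two cases, $\inf_{t\geq 0} N(t)>0$ (Case~I) and $\liminf_{t\to\infty} N(t)=0$ (Case~II), and derives a contradiction to $E(u)>0$ in each.  Case~II is handled by first proving (via a Weinstein-type concentration argument and the $L^2_x$-precompactness of the rescaled orbit) that the kinetic energy stays bounded along the sequence $t_n$, and then running a truncated virial on $[0,t_n]$ with radius $R_n\sim N(t_n)^{-1}$, using the local-constancy bound $N(t_n)^{-1}\lesssim t_n^{1/2}$ to reach a contradiction.

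Second, you have the main obstacle backwards: there is no need to exclude $N(t)\to\infty$, and the restriction $d\geq 4$ does not enter through the radial Sobolev embedding.  The genuine analytic core is Theorem~\ref{T:kinetic} (uniform-in-time localization of the \emph{kinetic} energy outside radius $\sim\langle N(t)^{-1}\rangle$), which is what makes the error terms in the truncated virial identity \eqref{virial} controllable.  The proof of Theorem~\ref{T:kinetic} uses an incoming/outgoing decomposition together with the Duhamel formula \eqref{duhamel} and the weighted Strichartz estimate of Lemma~\ref{L:wes}; the dimension restriction $d\geq 4$ is needed precisely so that $|u|^{4/d}u\in L^\infty_t L^{2d/(d+4)}_x$ from $u\in L^\infty_t L^2_x$ alone.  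Once kinetic energy localization is in hand, the truncated virial yields $\partial_{tt}V_R\geq 8E(u)$ in Case~I and the growing-radius version in Case~II, forcing $E(u)=0$; the sharp Gagliardo--Nirenberg characterization (Proposition~\ref{P:variational}) then gives \eqref{E:it case} directly, with no further ``rigidity'' step needed.
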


In some sense, the study of infinite-time blowup solutions lies half-way between the study of finite-time blowup
solutions and the study of global well-posedness.  As such, this paper is something of a hybrid between \cite{hmidi-keraani,merle2}
and \cite{ktv:2d,kvz:blowup}.

The restriction to spherically symmetric data ultimately stems from the fact that Theorem~\ref{T:scatter} is not known without this
assumption.  The reason we need to assume $d\geq 4$ is more technical; without it we are unable to control the behaviour of the
kinetic energy in a satisfactory way.  See Theorem~\ref{T:kinetic} and the discussion that follows it.

\subsection{Outline of the proof}

As in previous investigations, the role of the hypothesis $u_0\in H^1_x$ (as opposed to $u_0\in L^2_x$) is to gain access to
an additional conservation law, namely, the energy:
\begin{equation}\label{E:Energy}
E(u(t)):=\int_{\R^d} \tfrac 12 |\nabla u(t,x)|^2-  \tfrac d{2(d+2)}|u(t,x)|^{\frac{2(d+2)}d} \,dx.
\end{equation}

The next proposition, which is due to Weinstein, demonstrates the important relationship between $Q$ and the energy:

\begin{prop}[Sharp Gagliardo--Nirenberg inequality, \cite{weinstein}]\label{P:variational}
For $f\in H_x^1(\mathbb R^d)$,
\begin{equation}\label{sharp-gn}
\|f\|_{\tfrac{2(d+2)}d}^{\tfrac{2(d+2)}d}\le \frac{d+2}d\left(\frac{\|f\|_2}{\|Q\|_2} \right)^{\frac 4d}\|\nabla f\|_2^2,
\end{equation}
with equality if and only if
\begin{equation}\label{GNeq}
f(x)= c e^{i\theta_0}\lambda_0^{\frac d2}Q(\lambda_0 (x-x_0))
\end{equation}
for some $\theta\in[0,2\pi)$, $x_0\in\R^d$, and $c,\lambda_0\in(0,\infty)$.  In particular,
if $M(f)=M(Q)$, then $E(f)\geq 0$ with equality if and only \eqref{GNeq} holds with $c=1$.
\end{prop}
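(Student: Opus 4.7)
The plan is to reduce the inequality to the identification of the extremizers of the scale-invariant Weinstein functional
\[
J(f) := \frac{\|\nabla f\|_{2}^{2}\,\|f\|_{2}^{4/d}}{\|f\|_{2(d+2)/d}^{2(d+2)/d}},
\]
and then compute its minimum value by relating the Euler--Lagrange equation to the ground state equation \eqref{ground state}. The target inequality \eqref{sharp-gn} is equivalent to $J(f) \ge \frac{d}{d+2}\|Q\|_{2}^{4/d}$ for every $f\in H_x^1\setminus\{0\}$, and the extremizer statement \eqref{GNeq} is equivalent to saying that the minimizers form exactly one orbit under the scalings $f\mapsto c\lambda^{d/2}f(\lambda(x-x_{0}))$ and the phase rotation $f\mapsto e^{i\theta_{0}}f$, which are precisely the symmetries of $J$.

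First I would show that the infimum $\alpha := \inf_{f\neq 0} J(f)$ is positive and attained. Positivity follows from the standard (non-sharp) Gagliardo--Nirenberg inequality. For existence of a minimizer one takes a minimizing sequence $\{f_{n}\}$, replaces each $f_{n}$ by its Schwarz symmetrization $f_{n}^{*}$ (which decreases or preserves $\|\nabla f_{n}\|_{2}$ while keeping the $L^{2}$ and $L^{2(d+2)/d}$ norms fixed), and then normalizes via the symmetries so that $\|f_{n}\|_{2}=\|\nabla f_{n}\|_{2}=1$. A standard concentration-compactness argument (or Strauss's radial compact embedding $H_{\text{rad}}^{1}\hookrightarrow L^{2(d+2)/d}$) then produces a radial, nonnegative, nontrivial weak limit $f_{\ast}$ which, by weak lower semicontinuity of the $H^{1}$ norm combined with the strong convergence in $L^{2(d+2)/d}$, achieves the infimum. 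This compactness step is where the main subtlety lies: in general the embedding is not compact, so one really needs either the radial restriction on the minimizing sequence or a proper Lions-type dichotomy/concentration analysis ruling out vanishing and splitting.

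Next I would extract the Euler--Lagrange equation. A first variation $\delta J(f_{\ast})=0$ gives, after multiplying by appropriate constants,
\[
a\,\Delta f_{\ast} - b\,f_{\ast} + c\,f_{\ast}^{1+4/d} = 0,
\]
with $a,b,c>0$ explicit multiples of $\|\nabla f_{\ast}\|_{2}^{2}$, $\|f_{\ast}\|_{2}^{2}$, $\|f_{\ast}\|_{2(d+2)/d}^{2(d+2)/d}$. Choosing the scaling parameters $c_{0},\lambda_{0}$ in $\widetilde Q(x):=c_{0}f_{\ast}(\lambda_{0} x)$ so as to absorb $a,b,c$ turns this into $\Delta\widetilde Q + \widetilde Q^{1+4/d}=\widetilde Q$. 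Since $f_{\ast}$ is positive and radial, so is $\widetilde Q$, and the Berestycki--Lions/Kwong uniqueness theorem forces $\widetilde Q=Q$. Thus every minimizer, up to the three symmetries of $J$, is equal to $Q$; in particular the minimum is unique and extremizers have the form \eqref{GNeq}.

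Finally I would compute $\alpha$ by evaluating $J(Q)$. Multiplying $\Delta Q + Q^{1+4/d}=Q$ by $Q$ and integrating yields the identity $\|Q\|_{2(d+2)/d}^{2(d+2)/d}=\|\nabla Q\|_{2}^{2}+\|Q\|_{2}^{2}$, and the Pohozaev identity (multiplying by $x\cdot\nabla Q$) gives $\tfrac{d-2}{2}\|\nabla Q\|_{2}^{2}-\tfrac{d^{2}}{2(d+2)}\|Q\|_{2(d+2)/d}^{2(d+2)/d}=-\tfrac{d}{2}\|Q\|_{2}^{2}$. Solving this $2\times 2$ linear system yields $\|\nabla Q\|_{2}^{2}=\tfrac{d}{2}\|Q\|_{2}^{2}$ and $\|Q\|_{2(d+2)/d}^{2(d+2)/d}=\tfrac{d+2}{2}\|Q\|_{2}^{2}$, and therefore $\alpha=J(Q)=\tfrac{d}{d+2}\|Q\|_{2}^{4/d}$, which is precisely the constant in \eqref{sharp-gn}. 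The last assertion on the energy is then immediate: when $M(f)=M(Q)$, \eqref{sharp-gn} reads $\|f\|_{2(d+2)/d}^{2(d+2)/d}\le \tfrac{d+2}{d}\|\nabla f\|_{2}^{2}$, so $E(f)\ge 0$, with equality forcing equality in \eqref{sharp-gn} and hence $f$ of the form \eqref{GNeq} with $c=1$ (since $M(f)=c^{2}M(Q)=M(Q)$).
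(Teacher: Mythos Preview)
The paper does not supply its own proof of this proposition; it is quoted as a known result of Weinstein \cite{weinstein}, so there is nothing to compare against. Your outline is exactly Weinstein's original variational argument, and the computations (the Pohozaev identities and the evaluation $J(Q)=\tfrac{d}{d+2}\|Q\|_2^{4/d}$) are correct.

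There is one genuine gap in the equality case. Your argument produces \emph{one} minimizer $f_\ast$ that is radial and nonnegative (because you Schwarz-symmetrized the minimizing sequence), and then identifies $f_\ast$ with $Q$ via Kwong's uniqueness. From this you conclude ``Thus every minimizer, up to the three symmetries of $J$, is equal to $Q$.'' That does not follow: you have only shown that the particular minimizer you constructed lies in the $Q$-orbit, not that an arbitrary (possibly non-radial, complex-valued) minimizer does. To close this you need two further standard steps. First, if $f$ is any minimizer then $\|\nabla|f|\|_2\le\|\nabla f\|_2$ forces $|f|$ to be a minimizer with $\|\nabla|f|\|_2=\|\nabla f\|_2$; since $|f|$ then solves the Euler--Lagrange equation and is strictly positive (by the strong maximum principle), the equality case of the diamagnetic inequality gives $f=e^{i\theta_0}|f|$ for some constant $\theta_0$. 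Second, $|f|$ is a positive $H^1$ solution of (after rescaling) \eqref{ground state}, so Gidas--Ni--Nirenberg radial symmetry plus Kwong's uniqueness identify $|f|$ with a translate/dilate of $Q$. Without invoking Gidas--Ni--Nirenberg (or, equivalently, the equality case of P\'olya--Szeg\H{o}), your symmetrization argument alone cannot rule out non-radial extremizers.
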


In addition to being a conserved quantity, the energy plays a further important role due to its appearance in
a monotonicity formula known as the virial identity:
\begin{equation}\label{true virial}
\partial_{tt} \int_{\R^d} |x|^2 |u(t,x)|^2\, dx = 16 E(u).
\end{equation}
This identity, together with minor modifications, has been a corner-stone in the investigation of blow-up
solutions since Glassey \cite{glassey} used it to demonstrate that negative energy solutions with initial data in $\Sigma$ must blow up.

The proof of Theorem~\ref{T:main} relies on both the sharp Gagliardo-Nirenberg inequality and a truncated version of the virial identity.
Recall that we need only consider the case where $u$ blows up in infinite time, since Merle's result \cite{merle2} covers the
other case.  As $M(u)=M(Q)$, Proposition~\ref{P:variational} shows that $E(u)\geq 0$ with equality if and only if \eqref{E:it case}
holds.  Thus we may prove the theorem by ruling out the possibility that $E(u)>0$; this will be done using a truncation of the
virial identity.

Without loss of generality, we may assume that $u$ blows up at infinity forward in time.  The key to using the virial identity
in our context is proving that the mass and the kinetic energy remain concentrated near the spatial origin.  For the mass, we
may rely on \cite{BegoutVargas,keraani-2,compact} together with the identification of $M(Q)$ as the minimal mass, which was
done in \cite{ktv:2d,kvz:blowup}.  The precise result we require reads as follows:

\begin{thm}[Almost periodicity modulo scaling]\label{T:periodic}
Let $u:[t_0,\infty) \times \mathbb R^d\to\C$ be a spherically symmetric solution to \eqref{nls} which satisfies $M(u)=M(Q)$
and blows up forward in time.  Then $u$ is \emph{almost periodic modulo scaling} in the following sense:
there exist functions $N:[t_0,\infty)\to \mathbb R^+$ and $C:\mathbb R^+\to\mathbb R^+$ such that
\begin{equation}\label{eq_mass_local}
\int_{|x|\ge C(\eta)/N(t)}|u(t,x)|^2 dx\le \eta \quad \text{and} \quad  \int_{|\xi|\ge C(\eta)N(t)}|\hat u(t,\xi)|^2 d\xi\le \eta
\end{equation}
for all $t\in [t_0, \infty)$ and $\eta>0$.  We refer to the function $N$ as the frequency scale function
and to $C$ as the compactness modulus function.
\end{thm}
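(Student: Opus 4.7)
The plan is to argue by concentration compactness, using the radial linear profile decomposition of \cite{BegoutVargas,keraani-2} together with the crucial input---recorded in this paper as Theorem~\ref{T:scatter}---that in the spherically symmetric category, $M(Q)$ is the minimal mass for which non-scattering is possible in dimension $d\ge 2$. Concretely, I would show that for every sequence $t_n\in[t_0,\infty)$ there exist scales $\lambda_n>0$ such that, along a subsequence, $\lambda_n^{d/2}u(t_n,\lambda_n\,\cdot\,)$ converges strongly in $L^2_x$. By a standard argument, this precompactness of the orbit modulo scaling is equivalent to the simultaneous spatial and frequency localization \eqref{eq_mass_local}, with $N(t):=1/\lambda(t)$ and $C(\eta)$ extracted from the uniform tail bounds of the compact family.

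To produce the strong limit I would extract a radial linear profile decomposition of the bounded sequence $\{u(t_n)\}\subset L^2_{x,\mathrm{rad}}$, writing
\[
u(t_n) = \sum_{j=1}^{J} (\lambda_n^{(j)})^{-d/2}\bigl[e^{is_n^{(j)}\Delta}\phi^{(j)}\bigr]\bigl(\,\cdot\,/\lambda_n^{(j)}\bigr) + w_n^{J},
\]
with the usual asymptotic orthogonality of the parameters $(\lambda_n^{(j)},s_n^{(j)})$, a negligible Strichartz tail $\limsup_n\|e^{it\Delta}w_n^{J}\|_{L^{2(d+2)/d}_{t,x}}\to 0$ as $J\to\infty$, and the mass-decoupling identity $M(Q)=\sum_j M(\phi^{(j)})+M(w_n^{J})+o(1)$. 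Each profile is then upgraded to a nonlinear avatar $v^{(j)}$ by solving \eqref{nls} with the corresponding (possibly scattering) data. Since each $M(\phi^{(j)})\le M(Q)$, Theorem~\ref{T:scatter} forces any profile with mass strictly less than $M(Q)$ to be global and scatter. A nonlinear-superposition and stability argument in the spirit of \cite{ktv:2d,kvz:blowup,compact} then shows that if any of the following held---two or more profiles carry positive mass, a single profile has mass strictly less than $M(Q)$, the residue retains positive mass in the limit, or the time parameter $s_n^{(1)}$ of the lone profile is unbounded (which would again amount to scattering into linear flow)---then $u$ itself would have to scatter on $[t_n,\infty)$ for $n$ large, contradicting the blowup hypothesis. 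The only surviving possibility is one profile with $M(\phi^{(1)})=M(Q)$, vanishing residue in $L^2$, and bounded $s_n^{(1)}$, which yields the desired strong convergence after rescaling.

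The main obstacle is the nonlinear-superposition step: one must glue well-separated radial nonlinear profiles into an approximate solution of \eqref{nls} whose Strichartz error is small and then invoke a mass-critical stability theorem, for radial $L^2_x$ data in dimension $d\ge 2$, to promote the approximation to an exact scattering statement for $u$. This machinery is delicate but by now standard, and the required radial stability, small-data, and long-time perturbation results are precisely what are developed in the references cited in the theorem; the proof is then a matter of verifying that each alternative to a single minimal-mass profile triggers the stability theorem and ends in a contradiction with blowup.
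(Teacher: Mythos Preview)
Your proposal is correct and follows exactly the approach that underlies this theorem: the paper does not give its own proof of Theorem~\ref{T:periodic} but instead cites it as a consequence of the concentration-compactness machinery in \cite{BegoutVargas,keraani-2,compact} combined with the identification of $M(Q)$ as the minimal blowup mass from \cite{ktv:2d,kvz:blowup} (Theorem~\ref{T:scatter} here). Your outline---radial profile decomposition, mass decoupling, Theorem~\ref{T:scatter} to force scattering of sub-threshold profiles, and nonlinear stability to rule out every scenario except a single minimal-mass profile with bounded time shifts---is precisely the argument carried out in those references, so there is nothing to add.
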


\begin{rem}\label{R:bdd N} The parameter $N(t)$ measures the frequency scale of the solution at time $t$ while $1/N(t)$ measures its spatial scale.
Further properties of the function $N(t)$ are discussed in \cite{ktv:2d, compact}.  One such property that we will use
is a consequence of the local-constancy property of $N(t)$ (see \cite[Corollary~3.6]{ktv:2d}), namely,
$$N(t_1)\gtrsim N(t_2) \langle t_1-t_2\rangle^{-1/2},$$
for all pairs $t_1,t_2\in [t_0,\infty)$.
\end{rem}

\begin{rem} By the Ascoli-Arzela Theorem, \eqref{eq_mass_local} is equivalent to saying that the set
$\{N(t)^{-\frac d2}u(t,\tfrac x{N(t)}), \ t\in[t_0,\infty)\}$ is precompact in $L_x^2(\R^d)$.
\end{rem}

One important consequence of the fact that $u$ is almost periodic modulo scaling (near positive infinity) is the following Duhamel formula,
where the free evolution term disappears:

\begin{lem}[{\cite[Section 6]{compact}}]\label{duhamel L}
Let $u$ be an almost periodic solution to \eqref{nls} on  $[t_0,\infty)$.  Then, for all $t\in [t_0,\infty)$,
\begin{equation}\label{duhamel}
u(t) = - \lim_{T\nearrow\,\infty}i\int_t^T e^{i(t-t')\Delta} \bigl(|u|^{\frac 4d}u\bigr)(t')\,dt'
\end{equation}
as a weak limit in $L_x^2$.
\end{lem}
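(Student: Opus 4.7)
The plan is to rearrange the standard Duhamel formula so that the claim reduces to showing $e^{i(t-T)\Delta}u(T)\rightharpoonup 0$ weakly in $L^2_x$ as $T\nearrow\infty$. From the identity
\begin{equation*}
u(t)=e^{i(t-T)\Delta}u(T)-i\int_t^T e^{i(t-t')\Delta}\bigl(|u|^{\frac 4d}u\bigr)(t')\,dt',\qquad T>t,
\end{equation*}
one sees that once the free term vanishes in the weak sense, the remaining integral must converge weakly to $iu(t)$, which is exactly \eqref{duhamel}.

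To establish the weak limit, I would test against a dense subset of $L^2_x$, namely $\phi$ with $\hat\phi\in C_c^\infty(\R^d)$ supported in an annulus $\{a\leq|\xi|\leq b\}$ bounded away from the origin. Setting $s:=T-t$ and $\psi_T:=e^{is\Delta}\phi$, Parseval gives $\|\psi_T\|_2=\|\phi\|_2$ and $\hat\psi_T$ inherits the annular support of $\hat\phi$, while the dispersive estimate supplies $\|\psi_T\|_\infty\lesssim s^{-d/2}\|\phi\|_1$. The claim reduces to $\langle u(T),\psi_T\rangle\to 0$ as $T\to\infty$.

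Given $\eta>0$, I would invoke Theorem~\ref{T:periodic} in two complementary ways depending on where $N(T)$ sits relative to the fixed threshold $M:=a/C(\eta)$. When $N(T)\leq M$, one has $\{|\xi|\geq a\}\subset\{|\xi|\geq C(\eta)N(T)\}$, so pairing $u(T)$ with $\psi_T$ and using the frequency-side bound in \eqref{eq_mass_local} yields $|\langle u(T),\psi_T\rangle|\leq\eta^{1/2}\|\phi\|_2$. When instead $N(T)>M$, I would split at the spatial radius $R(T):=C(\eta)/N(T)$: the spatial-side bound in \eqref{eq_mass_local} controls the exterior pairing by $\eta^{1/2}\|\phi\|_2$, while the interior pairing is at most $\|u(T)\|_2\,|B_{R(T)}|^{1/2}\,\|\psi_T\|_\infty\lesssim (C(\eta)/(sM))^{d/2}\|\phi\|_1$, which tends to $0$ as $T\to\infty$ with $\eta$ held fixed. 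Combining the two cases and sending $\eta\to 0$ gives the desired weak limit, and density in $L^2_x$ closes the argument.

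The subtle point, and the main obstacle, is the need to combine the spatial and frequency halves of \eqref{eq_mass_local}. A pure dispersive estimate would force $sN(T)\to\infty$, which fails whenever $N(T)$ crashes to zero; pure frequency separation, on the other hand, is useless when $N(T)$ stays large. Threshold-splitting at $M=a/C(\eta)$ is precisely what lets these two control mechanisms cover all regimes of $N(T)$ simultaneously.
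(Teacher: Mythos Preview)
Your argument is correct. Note, however, that the paper does not supply its own proof of this lemma; it is simply imported from \cite[Section~6]{compact}, so there is no in-paper proof to compare against. Your approach---rearranging Duhamel to reduce to $e^{i(t-T)\Delta}u(T)\rightharpoonup 0$, testing against Schwartz $\phi$ with $\hat\phi$ compactly supported away from the origin, and then splitting on whether $N(T)$ lies below or above the threshold $a/C(\eta)$ so that either the frequency-side or the spatial-side compactness bound from \eqref{eq_mass_local} applies---is precisely the standard argument used in the cited reference. One small point worth making explicit: passing from the dense test class to all of $L^2_x$ uses the uniform bound $\|e^{i(t-T)\Delta}u(T)\|_2 = M(u)^{1/2}$, which follows from mass conservation and unitarity of the propagator; with that noted, the proof is complete.
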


With the localization of mass done for us, the key ingredient in the proof of Theorem~\ref{T:main} is the localization of kinetic energy.
Due to the focusing nature of the problem, both the kinetic energy and the potential energy can become very large while still maintaining
the finiteness of the energy. This makes the localization of the kinetic energy rather surprising.  We will prove the following

\begin{thm}[Kinetic energy localization] \label{T:kinetic}
Let $d\ge 4$ and let $u_0\in H_x^1(\mathbb R^d)$ be spherically symmetric with $M(u_0)=M(Q)$.  Let $u$ be a global solution to \eqref{nls}
with initial data $u(0)= u_0$.  Assume that $u$ is almost periodic modulo scaling on $[0,\infty)$ with frequency scale function $N(t)$.
Then, for any $\eta>0$ there exists $C(\eta)>0$ such that
\begin{align*}
\|\nabla u(t)\|_{L_x^2(|x|>C(\eta)\langle N(t)^{-1}\rangle)} \le \eta.
\end{align*}
\end{thm}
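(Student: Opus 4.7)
The strategy is to convert the mass localization of Theorem~\ref{T:periodic} into kinetic energy localization by combining the sharp Gagliardo--Nirenberg inequality (Proposition~\ref{P:variational}) with the weak Duhamel representation of Lemma~\ref{duhamel L}. Fix $\eta>0$ and $t\in[0,\infty)$, and let $\chi_R$ be a smooth radial cutoff that vanishes on $\{|x|\le R\}$ and equals $1$ on $\{|x|\ge 2R\}$, with $R:=C(\eta)\langle N(t)^{-1}\rangle$. Up to boundary terms on the shell $\{R<|x|<2R\}$ controlled by Theorem~\ref{T:periodic}, the quantity $\|\nabla u(t)\|_{L_x^2(|x|>2R)}$ is essentially $\|\nabla(\chi_R u(t))\|_{L_x^2}$. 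Since $M(\chi_R u(t))$ can be made arbitrarily small by taking $C(\eta)$ large, Proposition~\ref{P:variational} yields
\begin{equation*}
\tfrac{d}{2(d+2)}\|\chi_R u(t)\|_{L_x^{2(d+2)/d}}^{2(d+2)/d}\;\le\;\tfrac12\bigl(M(\chi_R u(t))/M(Q)\bigr)^{2/d}\|\nabla(\chi_R u(t))\|_{L_x^2}^2,
\end{equation*}
which for $C(\eta)$ sufficiently large implies $\|\nabla(\chi_R u(t))\|_{L_x^2}^2 \le 4\,E(\chi_R u(t))$. The task thus reduces to proving smallness of the truncated energy $E(\chi_R u(t))$.

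To this end, I use the weak Duhamel formula from Lemma~\ref{duhamel L},
\begin{equation*}
u(t) = -i\lim_{T\to\infty}\int_t^T e^{i(t-s)\Delta}\bigl(|u|^{4/d}u\bigr)(s)\,ds \quad \text{weakly in }L_x^2,
\end{equation*}
to express $\chi_R u(t)$, its gradient, and hence $E(\chi_R u(t))$, as spacetime integrals against the nonlinearity $F(u) = |u|^{4/d}u$. Because $F(u(s))$ inherits the spatial localization of $u(s)$ at scale $1/N(s)$, its contribution at $|x|\ge R$ can be estimated by combining the dispersive decay of $e^{i(t-s)\Delta}$ with the radial Sobolev embedding $\||x|^{(d-1)/2}f\|_{L_x^\infty}\lesssim \|f\|_{L_x^2}^{1/2}\|\nabla f\|_{L_x^2}^{1/2}$ (where spherical symmetry is crucial). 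I partition $[t,\infty)=[t,t+T_*]\cup[t+T_*,\infty)$ with $T_*$ chosen in terms of $N(t)$ using the local constancy of $N(\cdot)$ from Remark~\ref{R:bdd N}. On the near piece, $N(s)\sim N(t)$ keeps $F(u(s))$ concentrated in $\{|x|\lesssim 1/N(t)\}$ (much smaller than $R$), and a weighted (radial) Strichartz estimate yields smallness. On the far piece, the dispersive decay $\|e^{i\tau\Delta}\|_{L_x^1\to L_x^\infty}\lesssim |\tau|^{-d/2}$ combined with the $L_x^2$-mass bound provides smallness.

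The main obstacle is the absence of any a priori uniform bound on $\|\nabla u(t)\|_{L_x^2}$, which forces every estimate to be made in $L_x^2$-based norms and leaves the resulting bounds borderline. The dimensional hypothesis $d\ge 4$ enters precisely here: the exponent $4/d\le 1$ of the nonlinearity is then small enough that the relevant Strichartz/Sobolev interpolations close and the nonlinear contribution can be absorbed into the bootstrap. In dimensions $d=2,3$ the analogous argument fails, consistent with the dimensional restriction in the statement.
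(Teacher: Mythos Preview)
Your reduction via Gagliardo--Nirenberg is essentially tautological: once $M(\chi_R u)$ is small, $E(\chi_R u)\sim\tfrac12\|\nabla(\chi_R u)\|_2^2$, so ``smallness of the truncated energy'' is the same problem you started with. The substantive gap is in the Duhamel argument. You use only the \emph{forward} weak-$L^2$ Duhamel of Lemma~\ref{duhamel L}, whereas the paper's proof hinges on an ingredient you omit entirely: an incoming/outgoing decomposition of $u$ into spherical waves (Proposition~\ref{P:P properties}), with the forward Duhamel applied to the outgoing piece and the \emph{backward} Duhamel---which carries the initial-data term $e^{it\Delta}u_0$---applied to the incoming piece. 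This is precisely what rules out the dangerous scenario the paper flags: high-frequency waves far from the origin carrying significant kinetic energy but negligible mass. A one-sided Duhamel provides no mechanism to exclude them. Relatedly, you never invoke the hypothesis $u_0\in H^1_x$; in the paper it enters through the backward Duhamel as the term $\|P_N u_0\|_2$ in Proposition~\ref{P:piece est}, summable against $N$ because $\|\nabla u_0\|_2<\infty$.

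Your outline also lacks any frequency decomposition. The paper works dyadically, proving $\|\varphi_{>1}P_N u\|_{L^\infty_t L^2_x}\lesssim_u N^{-1-\eps}+\|P_N u_0\|_2$ via a two-step bootstrap (Lemmas~\ref{L:tail} and~\ref{L:upgrade}), and only then sums over $N$ to recover $\nabla u$. Without frequency localization the gradient is uncontrolled: the weighted Strichartz estimate you invoke gains no derivative, and the $L^1_x\to L^\infty_x$ dispersive bound cannot be paired with an ``$L^2_x$-mass bound'' since $F(u)\in L^\infty_t L^{2d/(d+4)}_x$ is the only space available (this, incidentally, is the actual role of $d\ge4$). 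The near/far time splitting based on Remark~\ref{R:bdd N} does not substitute for these missing ingredients.
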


As alluded to earlier, the restriction to dimensions $d\geq 4$ stems from our inability to prove kinetic energy localization (uniformly
as $t\to\infty$) in lower dimensions.  Ultimately, the problem is that knowing only $u\in L^\infty_t L^2_x$, it is impossible to put
the nonlinearity $|u|^{\frac4d}u$ into any space $L^\infty_t L^p_x$ with $p\geq 1$ when $d<4$.  For $d\geq 4$, one easily sees that
$|u|^{\frac4d}u \in L^\infty_t L^{2d/(d+4)}_x$.

To prove Theorem~\ref{T:kinetic}, we make use of a decomposition into incoming and outgoing waves; this serves to minimize the
contribution from the nonlinearity near the origin (where we have only the \emph{a priori} estimate described in the previous
paragraph) and refocuses attention at large radii where we can take advantage of spherical symmetry to obtain smallness. The key
point is to use the Duhamel formula into the future to control the outgoing portion of $u$ and the Duhamel formula into the past
to control the incoming portion. The particular decomposition we use is taken from \cite{ktv:2d,kvz:blowup}; the tool we use to
exploit the spherical symmetry is a weighted Strichartz inequality, Lemma~\ref{L:wes}, which is also taken from these papers.
Section~3 is devoted to the proof Theorem~\ref{T:kinetic}.

Without a technique such as the in/out decomposition, we do not see how to preclude the following dangerous scenario:  there are extremely
high-frequency waves very far from the origin which contribute significant kinetic energy (and so cause trouble with any virial-type
arguments) whilst carrying essentially no mass (and thus not contradicting pre-compactness in $L^2_x$).

The main result, Theorem~\ref{T:main}, is proved in Section~4.  Here the argument breaks into two cases.  When $N(t)$ is bounded from below we
make use of the localization of kinetic energy to run a truncated virial argument and so show that $E(u)=0$.  Secondly,
we show that $N(t)$ cannot converge to zero, even along a subsequence.  This second part of the argument is closely reminiscent
of the treatment of the finite-time blowup case in \cite{hmidi-keraani}.

\subsection*{Acknowledgements} The authors were supported by the National Science Foundation under agreement No. DMS-0635607.
R.~Killip was further supported by NSF grants DMS-0701085 and DMS-0401277.  X.~Zhang was also supported by NSF grant No.~10601060
and project 973 in China.

%
%
%
%

\section{Preliminaries}

\subsection{Some notation}
We write $X \lesssim Y$ or $Y \gtrsim X$ to indicate $X \leq CY$ for some constant $C>0$.  We use $O(Y)$ to denote any quantity $X$
such that $|X| \lesssim Y$.  We use the notation $X \sim Y$ whenever $X \lesssim Y \lesssim X$.  The fact that these constants
depend upon the dimension $d$ will be suppressed.  If $C$ depends upon some additional parameters, we will indicate this with
subscripts; for example, $X \lesssim_u Y$ denotes the assertion that $X \leq C_u Y$ for some $C_u$ depending on $u$.

We use the `Japanese bracket' convention $\langle x \rangle := (1 +|x|^2)^{1/2}$.

We write $L^q_t L^r_{x}$ to denote the Banach space with norm
$$ \| u \|_{L^q_t L^r_x(\R \times \R^d)} := \Bigl(\int_\R \Bigl(\int_{\R^d} |u(t,x)|^r\ dx\Bigr)^{q/r}\ dt\Bigr)^{1/q},$$
with the usual modifications when $q$ or $r$ are equal to infinity, or when the domain $\R \times \R^d$ is replaced by a smaller
region of spacetime such as $I \times \R^d$.  When $q=r$ we abbreviate $L^q_t L^q_x$ as $L^q_{t,x}$.

\subsection{Basic harmonic analysis}\label{ss:basic}
Let $\varphi\in C^\infty(\R^d)$ be a radial bump function supported in the ball $\{ x \in \R^d: |x| \leq \frac{25} {24} \}$ and equal to
one on the ball $\{ x \in \R^d: |x| \leq 1 \}$.  For any constant $C>0$, we denote $\varphi_{\le C}(x):= \varphi \bigl( \tfrac{x}{C}\bigr)$
and $\varphi_{> C}:=1-\varphi_{\le C}$.

For each number $N > 0$, we define the Fourier multipliers
\begin{align*}
\widehat{P_{\leq N} f}(\xi) &:= \varphi_{\leq N}(\xi) \hat f(\xi)\\
\widehat{P_{> N} f}(\xi) &:= \varphi_{> N}(\xi) \hat f(\xi)\\
\widehat{P_N f}(\xi) &:= (\varphi_{\leq N} - \varphi_{\leq N/2})(\xi) \hat f(\xi)
\end{align*}
and similarly $P_{<N}$ and $P_{\geq N}$.  We also define
$$ P_{M < \cdot \leq N} := P_{\leq N} - P_{\leq M} = \sum_{M < N' \leq N} P_{N'}$$
whenever $M < N$.  We will usually use these multipliers when $M$ and $N$ are \emph{dyadic numbers} (that is, of the form $2^n$
for some integer $n$); in particular, all summations over $N$ or $M$ are understood to be over dyadic numbers.  Nevertheless, it
will occasionally be convenient to allow $M$ and $N$ to not be a power of $2$.  As $P_N$ is not truly a projection, $P_N^2\neq P_N$,
we will occasionally need to use fattened Littlewood-Paley operators:
\begin{equation}\label{PMtilde}
\tilde P_N := P_{N/2} + P_N +P_{2N}.
\end{equation}
These obey $P_N \tilde P_N = \tilde P_N P_N= P_N$.

Like all Fourier multipliers, the Littlewood-Paley operators commute with the propagator $e^{it\Delta}$, as well as with
differential operators such as $i\partial_t + \Delta$. We will use basic properties of these operators many many times,
including

\begin{lem}[Bernstein estimates]\label{Bernstein}
 For $1 \leq p \leq q \leq \infty$,
\begin{align*}
\bigl\| |\nabla|^{\pm s} P_N f\bigr\|_{L^p_x(\R^d)} &\sim N^{\pm s} \| P_N f \|_{L^p_x(\R^d)},\\
\|P_{\leq N} f\|_{L^q_x(\R^d)} &\lesssim N^{\frac{d}{p}-\frac{d}{q}} \|P_{\leq N} f\|_{L^p_x(\R^d)},\\
\|P_N f\|_{L^q_x(\R^d)} &\lesssim N^{\frac{d}{p}-\frac{d}{q}} \| P_N f\|_{L^p_x(\R^d)}.
\end{align*}
\end{lem}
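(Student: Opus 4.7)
The plan is to reduce all three inequalities to Young's convolution inequality, exploiting the fact that every Littlewood--Paley operator in sight is a Fourier multiplier whose symbol is a Schwartz function of $\xi/N$.

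First I would handle the second and third displayed inequalities. Write $P_{\leq N} f = K_N \ast f$ where $K_N(x) = N^d \check{\varphi}(Nx)$, and similarly $P_N f = \tilde K_N \ast f$ where $\tilde K_N(x) = N^d \check{\psi}(Nx)$ and $\psi = \varphi_{\leq 1} - \varphi_{\leq 1/2}$ is a Schwartz bump supported in an annulus. Since $\check{\varphi}, \check{\psi} \in \mathcal{S}(\R^d)$, Young's inequality with exponents satisfying $1 + \tfrac{1}{q} = \tfrac{1}{r} + \tfrac{1}{p}$ gives $\|K_N \ast f\|_{L^q} \leq \|K_N\|_{L^r}\|f\|_{L^p}$, and a one-line scaling computation yields $\|K_N\|_{L^r} = N^{d(1-1/r)}\|\check{\varphi}\|_{L^r} \sim N^{d/p-d/q}$ (the implicit constants depending only on $\varphi$, $p$, $q$, $d$). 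The same applies to $\tilde K_N$.

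For the equivalence $\||\nabla|^{\pm s} P_N f\|_{L^p} \sim N^{\pm s}\|P_N f\|_{L^p}$, I would pick an auxiliary smooth cutoff $\tilde{\psi}$ that equals $1$ on $\operatorname{supp}(\varphi_{\leq 1} - \varphi_{\leq 1/2})$ and vanishes in a neighborhood of the origin. Then $|\nabla|^{\pm s} P_N = N^{\pm s} M_N^{(\pm)} P_N$, where $M_N^{(\pm)}$ is the Fourier multiplier with symbol $m^{(\pm)}(\xi/N)$ and $m^{(\pm)}(\xi) := |\xi|^{\pm s}\tilde{\psi}(\xi)$. Because $\tilde{\psi}$ cuts off a neighborhood of $\xi=0$, the function $m^{(\pm)}$ lies in $\mathcal{S}(\R^d)$ for both signs, so the convolution kernel of $M_N^{(\pm)}$ has $L^1_x$ norm independent of $N$. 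Young's inequality with $r=1$ then gives $\|M_N^{(\pm)} g\|_{L^p} \lesssim \|g\|_{L^p}$ for all $1 \leq p \leq \infty$, and applying this with $g = P_N f$ yields both directions of the claimed equivalence (using $P_N = \tilde{P}_N P_N$ where $\tilde{P}_N$ absorbs the cutoff $\tilde{\psi}$).

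The only mildly delicate step is checking that $m^{(-s)}(\xi) = |\xi|^{-s}\tilde{\psi}(\xi)$ is genuinely Schwartz, which requires the cutoff to kill the singularity of $|\xi|^{-s}$ at the origin; this is where the choice of $\tilde{\psi}$ supported away from $\xi = 0$ enters. Everything else is routine scaling and convolution. No oscillatory integral or stationary-phase analysis is needed, and there is no obstruction particular to the mass-critical setting --- these are purely harmonic-analytic facts about smooth compactly supported frequency localizations.
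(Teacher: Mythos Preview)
Your argument is correct and is exactly the standard proof of the Bernstein inequalities. The paper itself does not supply a proof of this lemma; it is stated as a basic tool and used without further comment, so there is nothing to compare against.

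One tiny remark: in the second and third inequalities you prove $\|K_N * g\|_{L^q} \lesssim N^{d/p-d/q}\|g\|_{L^p}$ for arbitrary $g$, whereas the stated form has $P_{\leq N} f$ (respectively $P_N f$) on \emph{both} sides. To pass from one to the other you need the same fattening trick you already invoke in the first part, namely $P_{\leq N} = \tilde P_{\leq N} P_{\leq N}$ for a slightly wider cutoff $\tilde P_{\leq N}$, and then apply your Young estimate to the kernel of $\tilde P_{\leq N}$ with $g = P_{\leq N} f$. You clearly know this device, so this is only a matter of making the write-up uniform.
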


While it is true that spatial cutoffs do not commute with Littlewood-Paley operators, we still have the following:

\begin{lem}[Mismatch estimates in real space]\label{L:mismatch_real}
Let $R,N>0$.  Then
\begin{align*}
\bigl\| \varphi_{> R} \nabla P_{\le N} \varphi_{\le\frac R2} f \bigr\|_p &\lsm_m N^{1-m} R^{-m} \|f\|_p\\
\bigl\| \varphi_{> R}  P_{\leq N} \varphi_{\le\frac R2} f \bigr\|_p &\lsm_m N^{-m} R^{-m} \|f\|_p
\end{align*}
for any $1\le p\le \infty$ and $m\geq 0$.
\end{lem}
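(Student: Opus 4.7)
The plan is standard: express each Littlewood--Paley cutoff as convolution with a rescaled Schwartz kernel, and then exploit the fact that the supports of the two spatial cutoffs are separated by a distance $\gtrsim R$. Setting $\psi := \mathcal F^{-1}\varphi$, which is Schwartz, we have $P_{\le N}g = K_N * g$ for $K_N(z) := N^d\psi(Nz)$, and $\nabla P_{\le N}g = L_N * g$ for $L_N(z) := N^{d+1}(\nabla\psi)(Nz)$. The basic input is the pointwise decay
$$|K_N(z)| \lsm_M N^d(1+N|z|)^{-M}, \qquad |L_N(z)| \lsm_M N^{d+1}(1+N|z|)^{-M}$$
valid for every $M\ge 0$.

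The key observation is that $\varphi_{>R}(x)\ne 0$ forces $|x| > R$, while $\varphi_{\le R/2}(y)\ne 0$ forces $|y| \le \tfrac{25}{48}R$; hence any contributing pair satisfies $|x-y| \gtrsim R$. Young's convolution inequality then gives
$$\bigl\|\varphi_{>R}\,P_{\le N}(\varphi_{\le R/2}f)\bigr\|_p \le \bigl\| K_N\, \mathbf 1_{\{|z|\gtrsim R\}}\bigr\|_1 \|f\|_p,$$
and after rescaling $w = Nz$ the $L^1$ norm on the right becomes $\int_{|w|\gtrsim NR}|\psi(w)|\,dw$. When $NR\ge 1$, taking $M = d+m$ in the Schwartz decay bounds this integral by $(NR)^{-m}$; when $NR < 1$, the integral is already bounded by $\|\psi\|_1 \lsm 1 \le (NR)^{-m}$. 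Either way one obtains the claimed $N^{-m}R^{-m}$. The first inequality follows identically upon replacing $K_N$ by $L_N$, the extra factor $N$ in the pointwise bound producing the $N^{1-m}$ prefactor in place of $N^{-m}$.

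There is no real obstacle here: the estimate is a routine combination of support disjointness with the Schwartz decay of the rescaled Littlewood--Paley kernel. The only point requiring mild care is the degenerate range $NR \ll 1$, where the claimed decay is vacuous and must be recovered from the trivial $L^1$ bound on the unrestricted kernel rather than from the Schwartz tail.
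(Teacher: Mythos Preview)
Your argument is correct and is essentially the same as the paper's: both use the rapid decay of the convolution kernel together with the support separation $|x-y|\gtrsim R$ and Young's inequality, the only cosmetic difference being that the paper phrases the kernel bound as a non-stationary phase estimate $|K(x,y)|\lesssim N^{d+1-2k}|x-y|^{-2k}$ rather than packaging it as Schwartz decay of $\psi$. Your treatment of the degenerate range $NR<1$ is a welcome clarification that the paper leaves implicit.
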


\begin{proof}
We will only prove the first inequality; the second follows similarly.

It is not hard to obtain kernel estimates for the operator $\varphi_{> R}\nabla P_{\le N}\varphi_{\le\frac R2}$. Indeed, an
exercise in non-stationary phase shows
\begin{align*}
\bigl|\varphi_{> R}\nabla P_{\le N}\varphi_{\le\frac R2}(x,y)\bigr|
\lesssim N^{d+1-2k} |x-y|^{-2k}\varphi_{|x-y|>\frac R2}
\end{align*}
for any $k\geq 0$.  An application of Young's inequality yields the claim.
\end{proof}

Similar estimates hold when the roles of the frequency and physical spaces are interchanged.  The proof is easiest when
working on $L_x^2$, which is the case we will need; nevertheless, the following statement holds on $L_x^p$ for any $1\leq p\leq \infty$.

\begin{lem}[Mismatch estimates in frequency space]\label{L:mismatch_fre}
For $R>0$ and $N,M>0$ such that $\max\{N,M\}\geq 4\min\{N,M\}$,
\begin{align*}
\bigl\|  P_N \varphi_{\le{R}} P_M f \bigr\|_2 &\lsm_m \max\{N,M\}^{-m} R^{-m} \|f\|_2\\
\bigl\|  P_N \varphi_{\le {R}} \nabla P_M f \bigr\|_2 &\lsm_m M \max\{N,M\}^{-m} R^{-m} \|f\|_2.
\end{align*}
for any $m\geq 0$.  The same estimates hold if we replace $\varphi_{\le R}$ by $\varphi_{>R}$.
\end{lem}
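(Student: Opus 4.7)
My plan is to transfer the operator to the frequency side via Plancherel and conclude by Schur's test, in the same spirit as Lemma~\ref{L:mismatch_real}. First, I would dispose of the $\varphi_{>R}$ case by writing $\varphi_{>R}=1-\varphi_{\le R}$: the contribution of the ``$1$'' term is $P_N P_M$ (respectively $P_N\nabla P_M=\nabla P_N P_M$), which vanishes identically because the Littlewood--Paley multiplier symbols of $P_N$ and $P_M$ have disjoint supports whenever $\max\{N,M\}\ge 4\min\{N,M\}$ (the supports lie in the annuli $\{N/2\le|\xi|\le 25N/24\}$ and $\{M/2\le|\xi|\le 25M/24\}$).

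For the $\varphi_{\le R}$ case, Plancherel identifies $\|P_N\varphi_{\le R}P_M f\|_{L^2_x}$ with the $L^2_\xi$-norm of
\[
\widehat{P_N\varphi_{\le R}P_M f}(\xi)=(\varphi_{\le N}-\varphi_{\le N/2})(\xi)\int R^d\hat\varphi\bigl(R(\xi-\eta)\bigr)(\varphi_{\le M}-\varphi_{\le M/2})(\eta)\,\hat f(\eta)\,d\eta,
\]
so it suffices to control the kernel $K(\xi,\eta)$ of this integral operator in $L^2_\xi\to L^2_\xi$. On the product of the two frequency supports $|\xi|\sim N$ and $|\eta|\sim M$, so the separation hypothesis forces $|\xi-\eta|\gtrsim\max\{N,M\}$. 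Invoking the Schwartz decay of $\hat\varphi$, in the regime $R\max\{N,M\}\ge 1$ one then has, for any $m\ge 0$,
\[
|K(\xi,\eta)|\lsm_m (R\max\{N,M\})^{-m}\,R^d(1+R|\xi-\eta|)^{-d-1}.
\]
Since $\int R^d(1+R|y|)^{-d-1}\,dy\lsm 1$ uniformly, Schur's test delivers the first estimate. The complementary regime $R\max\{N,M\}<1$ is handled trivially: the operator is $L^2$-bounded by $\|\varphi\|_\infty$, while the asserted bound is already $\ge 1$.

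For the gradient estimate, the same argument applies with the kernel multiplied by $i\eta$; since $|\eta|\sim M$ throughout the support of the multiplier of $P_M$, this contributes only an additional factor of $M$, producing the claimed bound. I expect no substantive obstacle here; the items requiring care are just the book-keeping of the support separation so that the gain appears as $\max\{N,M\}$ (not $\min\{N,M\}$ or $|N-M|$), and isolating the small-parameter regime $R\max\{N,M\}<1$ where the estimate is free.
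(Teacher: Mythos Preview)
Your proof is correct and follows essentially the same idea as the paper's: transfer to the Fourier side via Plancherel and exploit the spatial separation of the supports of the Littlewood--Paley symbols against the rapid decay of $\hat\varphi$. The paper packages this by observing that, after Plancherel, the operator has exactly the structure treated in Lemma~\ref{L:mismatch_real} (with the roles of physical and frequency variables swapped) and simply cites that lemma and its adjoint; you instead redo the kernel estimate explicitly via Schur's test, which is the same computation unpacked. For the gradient estimate the paper writes $P_N\varphi_{\le R}\nabla P_M=P_N\varphi_{\le R}P_M\nabla\tilde P_M$ and uses $\|\nabla\tilde P_M\|_{L^2\to L^2}\lesssim M$, which is equivalent to your insertion of the factor $|\eta|\sim M$ into the kernel.
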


\begin{proof}
The first claim follows from Plancherel's Theorem and Lemma~\ref{L:mismatch_real} and its adjoint.  To obtain the second claim from this, we write
$$
P_N \varphi_{\le {R}} \nabla P_M = P_N \varphi_{\le {R}} P_M \nabla \tilde P_M
$$
and note that $\|\nabla \tilde P_M\|_{L_x^2\to L_x^2}\lesssim M$.
\end{proof}

\subsection{Strichartz estimates}
Throughout this section we assume $d\geq 4$; of course, some of the estimates recorded below hold also in lower dimensions,
but we will not need that here.  First, we recall the following standard Strichartz estimate:

\begin{lem}[Strichartz]\label{L:strichartz}  Let $I$ be an interval, $t_0 \in I$, and let $u_0 \in L^2_x(\R^d)$
and $F \in L^{2(d+2)/(d+4)}_{t,x}(I \times \R^d)$.  Then, the function $u$ defined by
$$ u(t) := e^{i(t-t_0)\Delta} u_0 - i \int_{t_0}^t e^{i(t-t')\Delta} F(t')\ dt'$$
obeys the estimate
$$
\|u \|_{L^\infty_t L^2_x} + \| u \|_{L^{\frac{2(d+2)}d}_{t,x}} + \|u \|_{L^2_t L^{\frac{2d}{d-2}}_x}
    \lesssim \| u_0 \|_{L^2_x} + \|F\|_{L^{\frac{2(d+2)}{d+4}}_{t,x}},
$$
where all spacetime norms are over $I\times\R^d$.
\end{lem}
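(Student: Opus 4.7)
The plan is to identify the three spacetime norms on the left-hand side as standard admissible Strichartz norms for the Schr\"odinger propagator, recognize the forcing norm as the dual of one such norm, and then invoke the Keel--Tao Strichartz machinery. A direct check of the scaling identity $\tfrac{2}{q}+\tfrac{d}{r}=\tfrac{d}{2}$ shows that the three pairs $(q,r)=(\infty,2)$, $(\tfrac{2(d+2)}{d},\tfrac{2(d+2)}{d})$, and $(2,\tfrac{2d}{d-2})$ are all Schr\"odinger-admissible; the last of these is the Keel--Tao endpoint, which is legitimate since $d\geq 4\geq 3$. The exponent $\tfrac{2(d+2)}{d+4}$ is the H\"older conjugate of $\tfrac{2(d+2)}{d}$, so the forcing lives in the dual of the diagonal admissible pair.

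Concretely, I would first establish the homogeneous estimate $\|e^{it\Delta}u_0\|_{L^q_tL^r_x}\lesssim \|u_0\|_{L^2_x}$ for each admissible $(q,r)$. The non-endpoint cases are classical: start from the dispersive decay $\|e^{it\Delta}\|_{L^1_x\to L^\infty_x}\lesssim |t|^{-d/2}$, interpolate with the unitary $L^2_x$ bound to obtain $\|e^{it\Delta}\|_{L^{r'}_x\to L^r_x}\lesssim |t|^{-d(1/2-1/r)}$, then apply a $TT^*$ argument together with the one-dimensional Hardy--Littlewood--Sobolev inequality. For the endpoint $(2,\tfrac{2d}{d-2})$ neither Hardy--Littlewood--Sobolev nor Christ--Kiselev applies, and one must invoke the Keel--Tao bilinear argument: decompose $\langle e^{it\Delta}f,e^{is\Delta}g\rangle$ dyadically in $|t-s|$, bound each piece by the off-diagonal dispersive estimate, and conclude via bilinear real interpolation against the trivial $L^2\times L^2$ bound.

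Once the homogeneous estimates are in hand, the inhomogeneous piece $v(t):=\int_{t_0}^t e^{i(t-t')\Delta}F(t')\,dt'$ is treated by the retarded Strichartz inequality. For the output norms $L^\infty_tL^2_x$ and $L^{2(d+2)/d}_{t,x}$ against the dual diagonal forcing, one can combine the homogeneous bound with Christ--Kiselev (since the output exponents in $t$ are strictly larger than the forcing exponent $\tfrac{2(d+2)}{d+4}$ in $t$). The output norm $L^2_tL^{2d/(d-2)}_x$ against $L^{2(d+2)/(d+4)}_{t,x}$ forcing is covered directly by the full Keel--Tao retarded endpoint estimate, since both the output and dual-input pairs are admissible. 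Adding the resulting three bounds together with the homogeneous bound on $e^{i(t-t_0)\Delta}u_0$ yields the claim.

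The main obstacle is clearly the endpoint pair $(2,\tfrac{2d}{d-2})$, both in the homogeneous and in the inhomogeneous forms, since the usual dispersive plus Hardy--Littlewood--Sobolev route breaks down there and the Christ--Kiselev lemma fails at the endpoint; the Keel--Tao bilinear decomposition is what circumvents both difficulties. All other pieces are routine consequences of the dispersive estimate and duality.
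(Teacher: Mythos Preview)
Your outline is correct and is precisely the content behind the references the paper cites; the paper itself gives no argument, writing only ``See, for example, \cite{gv:strichartz, strichartz}.  For the endpoint see \cite{tao:keel}.'' One small remark: since the forcing here sits in the non-endpoint dual space $L^{2(d+2)/(d+4)}_{t,x}$ (time exponent strictly below $2$), Christ--Kiselev actually applies to \emph{all three} output norms, including $L^2_t L^{2d/(d-2)}_x$; the genuine Keel--Tao retarded endpoint machinery is only forced when both input and output time exponents equal $2$. Invoking Keel--Tao anyway is of course harmless and covers everything uniformly.
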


\begin{proof}
See, for example, \cite{gv:strichartz, strichartz}.  For the endpoint see \cite{tao:keel}.
\end{proof}

We will also need a weighted Strichartz estimate, which exploits heavily the spherical symmetry in order to obtain spatial decay.

\begin{lem}[Weighted Strichartz, \cite{ktv:2d, kvz:blowup}]\label{L:wes} Let $I$ be an interval, $t_0 \in I$, and let
$F:I\times\R^d\to \C$ be spherically symmetric.  Then,
$$ \biggl\|\int_{t_0}^t e^{i(t-t')\Delta} F(t')\, dt' \biggr\|_{L_x^2}
\lesssim \bigl\||x|^{-\frac{2(d-1)}q}F \bigr\|_{L_t^{\frac{q}{q-1}}L_x^{\frac{2q}{q+4}}(I \times \R^d)}
$$
for all $4\leq q\leq \infty$.
\end{lem}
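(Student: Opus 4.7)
The plan is to prove the estimate by duality, reducing it to a dual Strichartz bound for the free evolution, which in turn will follow from a weighted dispersive estimate tailored to radial data.

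First, by the Christ--Kiselev lemma (applicable since $q \geq 4$ keeps $q/(q-1)$ strictly less than $q$), it suffices to establish the untruncated version
\begin{equation*}
\Bigl\| \int_I e^{-it\Delta} F(t) \, dt \Bigr\|_{L^2_x} \lesssim \bigl\||x|^{-2(d-1)/q} F\bigr\|_{L^{q/(q-1)}_t L^{2q/(q+4)}_x}.
\end{equation*}
Dualizing in $L^2_x$ and absorbing the spatial weight into the test function, this is equivalent to the dual Strichartz estimate
\begin{equation*}
\bigl\||x|^{2(d-1)/q} e^{it\Delta} g\bigr\|_{L^q_t L^{2q/(q-4)}_x} \lesssim \|g\|_{L^2_x}, \qquad g \text{ spherically symmetric}.
\end{equation*}

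The crux is the endpoint $q=4$ version of a weighted dispersive estimate: for radial $f$,
\begin{equation*}
\bigl\||x|^{(d-1)/2} e^{it\Delta} f\bigr\|_{L^\infty_x} \lesssim |t|^{-1/2} \bigl\||y|^{-(d-1)/2} f\bigr\|_{L^1_x}.
\end{equation*}
I would derive this from the explicit integral representation of $e^{it\Delta}f$ on radial functions, whose angular part reduces to a Bessel-function kernel of order $(d-2)/2$; combining with the uniform estimate $|z^{-(d-2)/2} J_{(d-2)/2}(z)| \lesssim \min(1,z^{-(d-1)/2})$ (which is the sharp consolidation of the small- and large-argument Bessel asymptotics, equivalently stationary phase on $S^{d-1}$) yields
\begin{equation*}
r^{(d-1)/2} |(e^{it\Delta}f)(r)| \lesssim |t|^{-1/2} \int_0^\infty s^{(d-1)/2} |f(s)|\, ds,
\end{equation*}
and the integral on the right is a constant times $\||y|^{-(d-1)/2} f\|_{L^1_x(\R^d)}$.

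Interpolating this $q=4$ dispersive bound with the trivial $L^2_x$-conservation identity (the $q=\infty$ case) produces a weighted dispersive estimate at every $q \in [4,\infty]$ with time decay $|t|^{-2/q}$. Feeding this family of bounds into the $TT^*$ identity and applying one-dimensional Hardy--Littlewood--Sobolev on the time variable---whose fractional-integration exponent matches exactly the Strichartz scaling---produces the dual Strichartz estimate, and hence the lemma. The main obstacle is the weighted radial dispersive estimate at $q=4$: the entire argument hinges on extracting the $|x|^{(d-1)/2}$ spatial gain from spherical symmetry, which is precisely the role of the Bessel asymptotics above.
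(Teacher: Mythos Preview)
Your argument is correct and follows the standard route; the paper does not prove this lemma itself but quotes it from \cite{ktv:2d, kvz:blowup}, where the same strategy---duality, the radial weighted dispersive estimate via Bessel asymptotics, interpolation with $L^2$ conservation, and $TT^*$ closed by Hardy--Littlewood--Sobolev in time---is employed. One minor simplification: the Christ--Kiselev lemma is unnecessary here, since the target norm is $L^2_x$ at a single time and $e^{it\Delta}$ is unitary, so
\[
\biggl\|\int_{t_0}^t e^{i(t-t')\Delta} F(t')\,dt'\biggr\|_{L^2_x}
= \biggl\|\int_{t_0}^t e^{-it'\Delta} F(t')\,dt'\biggr\|_{L^2_x},
\]
and replacing $F$ by $F\cdot 1_{[t_0,t]}$ reduces directly to the untruncated adjoint bound.
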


\subsection{An in-out decomposition}
We will need an incoming/outgoing decomposition; we will use the one developed in \cite{ktv:2d, kvz:blowup}.
As there, we define operators $P^{\pm}$ by
\begin{align*}
[P^{\pm} f](r) :=\tfrac12 f(r)\pm \tfrac{i}{\pi} \int_0^\infty \frac{r^{2-d}\,f(\rho)\,\rho^{d-1}\,d\rho}{r^2-\rho^2},
\end{align*}
where the radial function $f: \R^d\to \C$ is written as a function of radius only.
We will refer to $P^+$ is the projection onto outgoing spherical waves; however, it is not a true projection as it is neither idempotent
nor self-adjoint.  Similarly, $P^-$ plays the role of a projection onto incoming spherical waves; its kernel is the complex
conjugate of the kernel of $P^+$ as required by time-reversal symmetry.

For $N>0$ let $P_N^{\pm}$ denote the product $P^{\pm}P_N$ where $P_N$ is the Littlewood-Paley projection.
We record the following properties of $P^{\pm}$ from \cite{ktv:2d, kvz:blowup}:

\begin{prop}[Properties of $P^\pm$, \cite{ktv:2d, kvz:blowup}]\label{P:P properties}\leavevmode
\begin{SL}
\item $P^+ + P^- $ represents the projection from $L^2$ onto $L^2_{\text{rad}}$.  In particular, it acts as the identity on radial functions.

\item Fix $N>0$.  Then
$$
\bigl\| \chi_{\gtrsim\frac 1N} P^{\pm}_{\geq N} f\bigr\|_{L^2(\R^d)}
\lesssim \bigl\| f \bigr\|_{L^2(\R^d)}
$$
with an $N$-independent constant.

\item For $|x|\gtrsim N^{-1}$ and $t\gtrsim N^{-2}$, the
integral kernel obeys
\begin{equation*}
\bigl| [P^\pm_N e^{\mp it\Delta}](x,y) \bigr| \lesssim \begin{cases}
    (|x||y|)^{-\frac {d-1}2}|t|^{-\frac 12}  &: \  |y|-|x|\sim  Nt \\[1ex]
     \frac{N^d}{(N|x|)^{\frac{d-1}2}\langle N|y|\rangle^{\frac{d-1}2}}
     \bigl\langle N^2t + N|x| - N|y| \bigr\rangle^{-m}
            &: \  \text{otherwise}\end{cases}
\end{equation*}
for all $m\geq 0$.

\item For $|x|\gtrsim N^{-1}$ and $|t|\lesssim N^{-2}$, the
integral kernel obeys
\begin{equation*}
\bigl| [P^\pm_N e^{\mp it\Delta}](x,y) \bigr|
    \lesssim   \frac{N^d}{(N|x|)^{\frac{d-1}2}\langle N|y|\rangle^{\frac{d-1}2}}
     \bigl\langle N|x| - N|y| \bigr\rangle^{-m}
\end{equation*}
for any $m\geq 0$.
\end{SL}
\end{prop}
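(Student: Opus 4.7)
The plan is to work directly from the definition of $P^\pm$ as a combination of $\tfrac12 I$ and a Hilbert-transform-type principal-value integral, and to read the four properties off from Bessel function asymptotics together with standard stationary-phase estimates. Part (i) is essentially tautological: adding the defining formulas for $P^+f$ and $P^-f$ cancels the two singular integrals and leaves $f(r)$, so $P^++P^-$ is the identity on radial functions; for non-radial $f$ one first orthogonally projects onto $L^2_{\text{rad}}$. For part (ii), the natural move is the change of variables $g(r) := r^{(d-1)/2} f(r)$, under which the singular integral in the definition of $P^\pm$ becomes, up to a remainder supported near $r=0$, the one-dimensional Hilbert transform on the half-line; since $\tfrac12(I\pm H)$ is bounded on $L^2((0,\infty),dr)$ this gives the main term. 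The factor $r^{2-d}$ in the definition creates a mismatch with the natural weight $r^{d-1}$, but this mismatch is controlled precisely by the spatial cutoff $\chi_{\gtrsim 1/N}$ (which forbids the dangerous region near the origin) together with the frequency localization at $\gtrsim N$ (which provides smoothing at scale $1/N$), leaving an error handled by a routine kernel bound.

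For (iii) and (iv) one uses the Hankel representation of radial $e^{\mp it\Delta}$: writing
\[
[P^\pm_N e^{\mp it\Delta}](x,y) = \int_0^\infty e^{\mp it\rho^2}\, m_N(\rho)\, K^\pm(|x|,\rho)\, J_{d/2-1}(|y|\rho)\,\rho^{d-1}\,d\rho,
\]
where $m_N$ is the Littlewood-Paley symbol and $K^\pm(r,\rho)$ records the action of $P^\pm$ on the Bessel kernel of the inverse Hankel transform, reduces the problem to a one-dimensional oscillatory integral. In the regime $\rho|y|\gtrsim 1$, the Bessel asymptotics $J_{d/2-1}(z)\sim z^{-1/2}(\alpha e^{iz}+\bar\alpha e^{-iz})$ together with the analogous expansion of $K^\pm$ extract a single exponential $e^{\pm i\rho|x|}$, leaving a phase $\mp[\,t\rho^2\mp\rho(|y|-|x|)]$ whose stationary point is $\rho_* = (|y|-|x|)/(2t)$. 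When $|y|-|x|\sim Nt$ this stationary point lies in the support of $m_N$ and standard stationary phase produces the $(|x||y|)^{-(d-1)/2}|t|^{-1/2}$ bound. Off this range, repeated integration by parts in $\rho$ delivers the rapid decay $\langle N^2 t + N|x|-N|y|\rangle^{-m}$. The short-time case (iv) is precisely the non-stationary regime of the same integral, where the phase has no critical point at all and integration by parts gives the stated bound directly.

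The main obstacle is the transition regime $\rho|y|\lesssim 1$, in which the Bessel asymptotics fail to yield clean exponentials and one can no longer cleanly separate $P^+$ from $P^-$ at the level of the integrand. There one bounds $J_{d/2-1}(|y|\rho)$ crudely by $(|y|\rho)^{d/2-1}$ and exploits the smoothness of $m_N$ together with the rapid decay it enforces off scale $N$, so that repeated integration by parts in $\rho$ produces a contribution absorbable into the prefactor $(N|x|)^{-(d-1)/2}\langle N|y|\rangle^{-(d-1)/2}$ --- the angle brackets around $N|y|$ in the statement are there precisely to accommodate this small-$|y|$ regime. A secondary technical point, needed to make the kernel estimate genuinely uniform in $x,y$, is a careful bound on the Hilbert-type kernel in the definition of $P^\pm$; this is essentially the same weight-conjugation computation that drives part (ii).
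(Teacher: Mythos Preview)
The paper does not prove this proposition at all; it is quoted verbatim from \cite{ktv:2d, kvz:blowup} with the preamble ``We record the following properties of $P^{\pm}$ from \cite{ktv:2d, kvz:blowup}'', and no argument is given. So there is no in-paper proof to compare your proposal against.

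That said, your outline is essentially the approach taken in those references. Part (i) is indeed immediate from the definition. For part (ii), the conjugation $g(r)=r^{(d-1)/2}f(r)$ reducing the singular integral to a one-dimensional Hilbert transform (plus a remainder controlled by the cutoff and the frequency localization) is exactly the mechanism used there. For parts (iii) and (iv), the Hankel representation followed by Bessel asymptotics and stationary phase/non-stationary phase on the resulting one-dimensional oscillatory integral in $\rho$ is the standard route, and your identification of the stationary point $\rho_*=(|y|-|x|)/(2t)$ and of the regime $|y|-|x|\sim Nt$ as the stationary-phase region is correct. Your remark that the $\langle N|y|\rangle$ (rather than $N|y|$) in the statement accommodates the region $\rho|y|\lesssim 1$ where Bessel asymptotics fail is also on target. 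The sketch is at the right level of detail for a result being imported by citation; if you were writing out a full proof you would need to be more careful about the precise form of $K^\pm(|x|,\rho)$ (it is not literally a single Bessel function but rather the outgoing/incoming Hankel piece $H^{(1)}_{d/2-1}$ or $H^{(2)}_{d/2-1}$, which is what selects one exponential over the other), and about the error terms in the Bessel asymptotics, which must themselves be handled by integration by parts.
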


%
%
%
%

\section{Localization of kinetic energy}

In this section we prove Theorem~\ref{T:kinetic}.  At the end of this section, we will show how this follows quickly
from the following result.

\begin{prop}\label{P:piece est}
Let $u$ be as in Theorem~\ref{T:kinetic}.  Then
\begin{equation*}
\|\varphi_{> 1} P_N u\|_{L_t^\infty L_x^2([0,\infty)\times\R^d)}\lsm_u N^{-1-\eps}+\|P_Nu_0\|_{L_x^2},
\end{equation*}
for some $\eps=\eps(d)>0$ and any $N\geq 1$.
\end{prop}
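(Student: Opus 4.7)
The plan is to exploit the incoming/outgoing decomposition of Section~2.4 together with the two available Duhamel formulas, and then apply the radial weighted Strichartz estimate.

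Since $u$ is spherically symmetric, decompose $P_N u(t)=P^+_N u(t)+P^-_N u(t)$ (Proposition~\ref{P:P properties}(i)) and treat the two pieces separately. For the outgoing piece, apply the ``no free evolution'' future Duhamel (Lemma~\ref{duhamel L}):
$$P^+_N u(t) = -i\lim_{T\to\infty}\int_t^T e^{-i(t'-t)\Delta}P^+_N F(t')\,dt', \qquad F:=|u|^{4/d}u.$$
For the incoming piece, use the standard Duhamel from the initial time:
$$P^-_N u(t)=e^{it\Delta}P^-_N u_0+i\int_0^t e^{i(t-t')\Delta}P^-_N F(t')\,dt'.$$
The point is that both integral terms involve the operator $P^\pm_N e^{\mp is\Delta}$ with $s\ge 0$, whose kernel by Proposition~\ref{P:P properties}(iii)--(iv) is essentially supported on the cone $|y|-|x|\sim Ns$ with rapidly decaying off-cone tails. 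Evaluated on $\{|x|>1\}$, this forces the nonlinear source $F$ to be effectively sampled at $|y|\gtrsim 1+Ns$, far from the origin for large $N$ or large $s$.

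The free-evolution term $\varphi_{>1}e^{it\Delta}P^-_N u_0$ is handled by Proposition~\ref{P:P properties}(ii): since $N\ge 1$, $\varphi_{>1}\lsm \chi_{\gtrsim 1/N}$, so $\|\varphi_{>1}e^{it\Delta}P^-_N u_0\|_{L^2}\lsm \|P_N u_0\|_{L^2}$ uniformly in $t$, giving the second term on the right-hand side of the proposition. For the two integral terms I would apply the weighted Strichartz estimate of Lemma~\ref{L:wes}, producing an upper bound of the form
$$\bigl\||y|^{-2(d-1)/q}\,\varphi_{>1/2}F\bigr\|_{L^{q/(q-1)}_{t'}L^{2q/(q+4)}_y}$$
for an appropriate $q\in[4,2d-1]$, after the harmless insertion of the cutoff $\varphi_{>1/2}$ on $F$ (the complementary contribution is $O(N^{-m})$ by the mismatch Lemma~\ref{L:mismatch_real} and the rapid decay of the kernel away from the cone). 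The weight $|y|^{-2(d-1)/q}$, combined with the effective support $|y|\gtrsim 1+Ns$, yields the factor $(1+Ns)^{-2(d-1)/q}$, while the nonlinearity is controlled by $\|F\|_{L^\infty_t L^{2d/(d+4)}_x}\lsm_u 1$; the requirement $2d/(d+4)\ge 1$ is precisely where the hypothesis $d\ge 4$ enters.

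The main obstacle is extracting the specific $N^{-1-\eps}$ decay uniformly in $t$. A single application of the weighted Strichartz with any admissible $q$ produces at best decay of order $N^{-(q-1)/q}$, which is strictly weaker than $N^{-1}$ and falls short of the target. The extra decay must therefore come from a bootstrap: using the $L^\infty_tL^2_x$ bound on $\varphi_{>1}P_N u$ obtained above, together with Bernstein and the mismatch lemmas, one improves the $L^{2d/(d+4)}_x$ control of the ``far'' portion of $F$ by a small positive power of $N^{-1}$; re-inserting this into the weighted Strichartz estimate closes the argument with $\eps>0$ to spare. Handling the off-cone tails of the kernel and establishing constants uniform in $t$ (since $u$ is only almost periodic, not necessarily decaying) form the main technical complications.
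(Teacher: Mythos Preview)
Your outline is essentially the paper's argument: in/out decomposition with the future Duhamel for $P^+_N$ and the past Duhamel for $P^-_N$, the free term contributing $\|P_N u_0\|_2$ via Proposition~\ref{P:P properties}(ii), kernel-based tail estimates for the near-origin piece of $F$, weighted Strichartz (the paper takes $q=d$) for the main piece, and a bootstrap to push past $N^{-1}$. The only places your sketch needs sharpening to match the paper are that the spatial cutoff on $F$ must be the \emph{time-dependent} $\varphi_{>cN\tau}$ on the long-time portion $\tau\ge N^{-1}$ of the Duhamel integral (a fixed $\varphi_{>1/2}$ alone leaves a divergent time integral---your phrase ``effective support $|y|\gtrsim 1+Ns$'' shows you see this), and the bootstrap is implemented by splitting $u=u_{\le N^{1/(1+s)}}+u_{>N^{1/(1+s)}}$ inside $F(u)$, gaining $N^{-s/(1+s)}$ from Bernstein on the low-frequency piece and from the inductive hypothesis on the high-frequency piece, then iterating twice from $s=0$ to reach $\eps=\tfrac{d^2-3d+1}{d(2d-1)}>0$ for $d\ge4$.
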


\begin{proof}
Fix $t>0$ and $N\geq 1$.  Decomposing $u(t)$ into incoming and outgoing spherical waves and using the Duhamel formula \eqref{duhamel}
into the future for the outgoing spherical waves and into the past for the incoming spherical waves, we write
\begin{align}\label{decomposition}
\varphi_{> 1}P_Nu(t)
&=\varphi_{> 1}P_N^+ u(t)+\varphi_{> 1}P_N^- u(t)  \notag\\
&=i\!\int_0^{\infty}\! \varphi_{> 1} P_N^+ e^{-i\tau\Delta}F\bigl(u(t+\tau)\bigr) \, d\tau + \varphi_{> 1} P_N^- e^{it\Delta} u_0\\
&\quad - i\!\int_0^t \! \varphi_{> 1} P_N^- e^{i\tau\Delta}F\bigl(u(t-\tau)\bigr) \, d\tau, \notag
\end{align}
where $F(u):=-|u|^{\frac 4d}u$ and the first integral is to be understood in the weak topology on $L_x^2$.

By Proposition~\ref{P:P properties} (ii),
\begin{align}\label{pe1}
\|\varphi_{> 1} P_N^- e^{it\Delta} u_0\|_2\lsm \|P_N u_0\|_2,
\end{align}
which is acceptable.

Next, we consider the contribution of the first term on the right-hand side of \eqref{decomposition};
the contribution of the last term can be dealt with by a similar argument.  We will show
\begin{align}\label{pe3}
\left\|\varphi_{> 1}\int_0^{\infty} P_N^+ e^{-i\tau\Delta}F\bigl(u(t+\tau)\bigr) \, d\tau \right\|_2\lsm_u N^{-1-\eps},
\end{align}
which will complete the proof of Proposition~\ref{P:piece est}.  We start by decomposing
\begin{align}\label{decomposition-2}
\int_0^{\infty}\! & P_N^+ e^{-i\tau\Delta}F\bigl(u(t+\tau)\bigr) \, d\tau\\
&= \int_0^{N^{-1}}\!\!\! P_N^+ e^{-i\tau\Delta}\varphi_{\leq \frac 12}F\bigl(u(t+\tau)\bigr) \, d\tau
    + \int_0^{N^{-1}}\!\!\! P_N^+ e^{-i\tau\Delta}\varphi_{> \frac 12}F\bigl(u(t+\tau)\bigr) \, d\tau \notag\\
&\quad + \int_{N^{-1}}^{\infty}\! P_N^+ e^{-i\tau\Delta}\varphi_{\leq \frac{N\tau}2}F\bigl(u(t+\tau)\bigr) \, d\tau
    +\int_{N^{-1}}^{\infty} \!P_N^+ e^{-i\tau\Delta}\varphi_{> \frac{N\tau}2}F\bigl(u(t+\tau)\bigr) \, d\tau,\notag
\end{align}
where the last two integrals are to be understood in the weak topology on $L_x^2$.

To continue, we need bounds on the integrals appearing above; that is the purpose of the next two lemmas.
The first lemma controls the contribution of the `tail' terms on the right-hand side of \eqref{decomposition-2}.

\begin{lem}[The tail]\label{L:tail}
For $N\geq 1$ and $r\in\{\tfrac 14, \tfrac12\}$,
\begin{align*}
\left \|\varphi_{> 2r} \int_0^{N^{-1}} P_N^+ e^{-i\tau\Delta}\varphi_{\leq r}F\bigl(u(t+\tau)\bigr) \, d\tau\right \|_2 &\lsm_u  N^{-10}\\
\left \|\varphi_{> 2r}\int_{N^{-1}}^\infty P_N^+ e^{-i\tau\Delta}\varphi_{\leq rN\tau} F\bigl(u(t+\tau)\bigr) \, d\tau \right \|_2 &\lsm_u N^{-10}.
\end{align*}
\end{lem}

\begin{proof}
Using the kernel estimates in Proposition~\ref{P:P properties}, we immediately deduce
\begin{align*}
\Bigl|\bigl[\varphi_{> 2r} P_N^+ e^{-i\tau\Delta}\varphi_{\leq r}\bigr](x,y)\Bigr|
&\lesssim_m \frac{N^d}{\langle N|x-y|\rangle^m} \varphi_{|x-y|>r} \quad \text{for}\quad 0\leq \tau< N^{-1}\\
\Bigl|\bigl[\varphi_{> 2r} P_N^+ e^{-i\tau\Delta}\varphi_{\leq rN\tau}\bigr](x,y)\Bigr|
&\lesssim_m \frac{N^d}{(N^2\tau)^m\langle N|x-y|\rangle^m} \quad \text{for}\quad \tau \geq N^{-1},
\end{align*}
for any $m\geq 0$.  Notice that for the first inequality, we need to combine parts (iii) and (iv) of Proposition~\ref{P:P properties}.

We will prove the second claim of the lemma; the first follows similarly.  Using the kernel estimates above together with Young's inequality,
\begin{align*}
\biggl\|\varphi_{> 2r}\int_{N^{-1}}^\infty P_N^+ e^{-i\tau\Delta}&\varphi_{\leq rN\tau}F\bigl(u(t+\tau)\bigr) \, d\tau \biggr\|_2\\
& \lesssim \int_{N^{-1}}^\infty \frac{d\tau}{(N^2\tau)^{10}} \Bigl\| \frac{N^d}{\langle N|x|\rangle^{10d}} \ast F(u) \Bigr\|_{L_t^\infty L_x^2}\\
& \lesssim N^{-11} \|F(u)\|_{L^\infty_t L_x^{\frac{2d}{d+4}}}\\
& \lesssim N^{-11} \|u\|_{L_t^\infty L_x^2}^{\frac{d}{d+4}}.
\end{align*}
This finishes the proof of Lemma~\ref{L:tail}.
\end{proof}

The remaining two terms on the right-hand side of \eqref{decomposition-2} will be handled via a bootstrap argument.
The next lemma is the tool that allows us to do this.

\begin{lem}[The main contribution]\label{L:upgrade}
Let $u$ be as in Theorem~\ref{T:kinetic} and $r\in\{\tfrac 14, \tfrac12\}$.  Assume that
\begin{equation}\label{assumption}
\bigl\|\varphi_{>r} P_N u \bigr\|_{L_t^\infty L_x^2([0,\infty)\times\R^d)} \lesssim_u N^{-s}
\end{equation}
for some $0\leq s<1$ and any $N\geq 1$.  Then for any $t\in[0,\infty)$,
\begin{align*}
\left\|\varphi_{>r}\int_0^{N^{-1}} P_N^+ e^{-i\tau\Delta}\varphi_{>r}F\bigl(u(t+\tau)\bigr) \, d\tau \right\|_2
&\lesssim_u N^{-\frac{d-1}d-\frac s{1+s}}\\
\left\|\varphi_{>r}\int_{N^{-1}}^{\infty} P_N^+ e^{-i\tau\Delta}\varphi_{> rN\tau}F\bigl(u(t+\tau)\bigr) \, d\tau\right\|_2
&\lesssim_u  N^{-\frac{d-1}d-\frac s{1+s}}.
\end{align*}
\end{lem}

\begin{proof}
Fix $N\geq 1$ and $t\in[0,\infty)$.  Using the decomposition $u= P_{\leq N^{\frac1{s+1}}}u + P_{> N^{\frac1{s+1}}}u$, we write
\begin{equation}\label{non decom}
F(u)=F\bigl(u_{\leq N^{\frac1{s+1}}}\bigr) + G u_{> N^{\frac1{s+1}}} \quad \text{with} \quad
    G= O\Bigl(|u_{\leq N^{\frac1{s+1}}}|^{\frac 4d}+ |u_{> N^{\frac1{s+1}}}|^{\frac 4d}\Bigr).
\end{equation}

We consider the contribution of the first term on the right-hand side of \eqref{non decom}.
Using the triangle inequality together with the fact that $\varphi_{>r}P^+_N$ is bounded on $L_x^2$, followed by
the weighted Strichartz inequality in Lemma~\ref{L:wes}, Lemma~\ref{L:mismatch_fre}, and Bernstein, we estimate
\begin{align*}
\biggl\|\varphi_{>r}\int_{N^{-1}}^{\infty}& P_N^+ e^{-i\tau\Delta}\varphi_{> rN\tau}F\bigl(u_{\leq N^{\frac1{s+1}}}(t+\tau)\bigr) \, d\tau\biggr\|_2\\
&\leq \biggl\|\varphi_{>r}\int_{N^{-1}}^{\infty} P_N^+ e^{-i\tau\Delta}\varphi_{> rN\tau}P_{\geq \frac N8}F\bigl(u_{\leq N^{\frac1{s+1}}}(t+\tau)\bigr) \, d\tau\biggr\|_2\\
&\quad + \biggl\|\varphi_{>r}\int_{N^{-1}}^{\infty} P_N^+ e^{-i\tau\Delta}\varphi_{> rN\tau}P_{< \frac N8}F\bigl(u_{\leq N^{\frac1{s+1}}}(t+\tau)\bigr) \, d\tau\biggr\|_2\\
&\lesssim \Bigl\||y|^{-\frac{2(d-1)}d}\varphi_{>rN\tau}P_{\geq \frac N8}F\bigl(u_{\leq N^{\frac1{s+1}}}(t+\tau)\bigr)\Bigr\|_{L_{\tau}^{\frac d{d-1}}L_x^{\frac{2d}{d+4}}([N^{-1},\infty)\times \R^d)}\\
&\quad + \bigl\|\tilde P_N \varphi_{> rN\tau}P_{< \frac N8}F\bigl(u_{\leq N^{\frac1{s+1}}}(t+\tau)\bigr) \bigr\|_{L_{\tau}^1L_x^2([N^{-1},\infty)\times\R^d)}\\
&\lesssim \left(\int_{N^{-1}}^{\infty} (N\tau)^{-2}\, d\tau\right)^{\frac{d-1}d} \bigl\|P_{\geq \frac N8}F\bigl(u_{\leq N^{\frac1{s+1}}}\bigr)\bigr\|_{L_t^\infty L_x^{\frac{2d}{d+4}}}\\
&\quad + N^{-10} \int_{N^{-1}}^\infty (N\tau)^{-10}\, d\tau \bigl\|P_{<\frac N4} F\bigl(u_{\leq N^{\frac1{s+1}}}\bigr)\bigr\|_{L_t^\infty L_x^2}\\
&\lesssim N^{-\frac{d-1}d-1} \bigl\| \nabla F\bigl(u_{\leq N^{\frac1{s+1}}}\bigr)\bigr\|_{L_t^\infty L_x^{\frac{2d}{d+4}}}
    + N^{-9} \|u\|_{L_t^\infty L_x^2}^{\frac{d}{d+4}}\\
&\lesssim_u N^{-\frac{d-1}d-\frac s{1+s}}.
\end{align*}
Arguing similarly and using H\"older's inequality in the time variable,
\begin{align*}
\biggl\|\varphi_{>r}\int_0^{N^{-1}}& P_N^+ e^{-i\tau\Delta}\varphi_{> r}F\bigl(u_{\leq N^{\frac1{s+1}}}(t+\tau)\bigr) \, d\tau\biggr\|_2\\
&\lesssim \Bigl\||y|^{-\frac{2(d-1)}d}\varphi_{>r}P_{\geq \frac N8}F\bigl(u_{\leq N^{\frac1{s+1}}}(t+\tau)\bigr)\Bigr\|_{L_{\tau}^{\frac d{d-1}}L_x^{\frac{2d}{d+4}}([0,N^{-1}]\times \R^d)}\\
&\quad + \bigl\|\tilde P_N \varphi_{> r}P_{< \frac N8}F\bigl(u_{\leq N^{\frac1{s+1}}}(t+\tau)\bigr) \bigr\|_{L_{\tau}^1L_x^2([0,N^{-1}]\times\R^d)}\\
&\lesssim N^{-\frac{d-1}d-1} \bigl\| \nabla F\bigl(u_{\leq N^{\frac1{s+1}}}\bigr)\bigr\|_{L_t^\infty L_x^{\frac{2d}{d+4}}}
    + N^{-10} \bigl\|P_{<\frac N4}F\bigl(u_{\leq N^{\frac1{s+1}}}\bigr)\bigr\|_{L_t^\infty L_x^2}\\
&\lesssim_u N^{-\frac{d-1}d-\frac s{1+s}}.
\end{align*}

Finally, we consider the contribution of the second term on the right-hand side of \eqref{non decom}.
Using the fact that $\varphi_{>r}P^+_N$ is bounded on $L_x^2$ together with the weighted Strichartz estimate in
Lemma~\ref{L:wes},
\begin{align*}
\biggl\|\varphi_{>r}\int_{N^{-1}}^{\infty} & P_N^+ e^{-i\tau\Delta}\varphi_{> rN\tau}  \bigl(G u_{> N^{\frac1{s+1}}}\bigr)(t+\tau)\, d\tau\biggr\|_2\\
&\lesssim \Bigl\||y|^{-\frac{2(d-1)}d}\varphi_{>rN\tau} \bigl(G u_{> N^{\frac1{s+1}}}\bigr)(t+\tau)\Bigr\|_{L_{\tau}^{\frac d{d-1}}L_x^{\frac{2d}{d+4}}([N^{-1},\infty)\times \R^d)}\\
&\lesssim \left(\int_{N^{-1}}^{\infty} (N\tau)^{-2}\, d\tau\right)^{\frac{d-1}d} \|u\|_{L_t^\infty L_x^2}^{\frac 4d}\bigl\|\varphi_{>r}u_{> N^{\frac1{s+1}}}\bigr\|_{L_t^\infty L_x^2}\\
&\lesssim_u N^{-\frac{d-1}d-\frac s{1+s}}.
\end{align*}
Note that for $s=0$ the last step uses the finiteness of the mass, while for $s>0$ it uses \eqref{assumption}.
Similarly,
\begin{align*}
\biggl\|\varphi_{>r}\int_0^{N^{-1}}& P_N^+ e^{-i\tau\Delta}\varphi_{>r} \bigl(G u_{> N^{\frac1{s+1}}}\bigr)(t+\tau) \, d\tau\biggr\|_2\\
&\lesssim \Bigl\||y|^{-\frac{2(d-1)}d}\varphi_{>r}  \bigl(G u_{> N^{\frac1{s+1}}}\bigr)(t+\tau)\Bigr\|_{L_{\tau}^{\frac d{d-1}}L_x^{\frac{2d}{d+4}}([0,N^{-1}]\times \R^d)}\\
&\lesssim N^{-\frac{d-1}d} \|u\|_{L_t^\infty L_x^2}^{\frac 4d}\bigl\|\varphi_{>r}u_{> N^{\frac1{s+1}}}\bigr\|_{L_t^\infty L_x^2}\\
&\lesssim_u N^{-\frac{d-1}d-\frac s{1+s}}.
\end{align*}

This completes the proof of Lemma~\ref{L:upgrade}.
\end{proof}

We now return to the proof of Proposition~\ref{P:piece est}.  We remind the reader that it suffices to prove \eqref{pe3}
and that we will do this using the decomposition \eqref{decomposition-2}.  We start by noting that hypothesis \eqref{assumption}
in Lemma~\ref{L:upgrade} holds with $s=0$.  Thus, combining Lemma~\ref{L:tail} with Lemma~\ref{L:upgrade} and taking $r=\tfrac14$, we obtain
$$
\left\|\varphi_{>\frac 12}\int_0^{\infty} P_N^+ e^{-i\tau\Delta}F\bigl(u(t+\tau)\bigr) \, d\tau \right\|_2\lsm_u N^{-\frac{d-1}d}.
$$
As remarked before, an analogous argument can be used to derive
$$
\left\|\varphi_{>\frac 12}\int_0^t P_N^- e^{i\tau\Delta}F\bigl(u(t-\tau)\bigr) \, d\tau \right\|_2\lsm_u N^{-\frac{d-1}d}.
$$
Combining this with the decomposition \eqref{decomposition} and \eqref{pe1}, we obtain
$$
\bigl\|\varphi_{>\frac 12}P_N u(t)\bigr\|_2
\lesssim_u N^{-\frac{d-1}d} + \|P_Nu_0\|_2
\lesssim_u  N^{-\frac{d-1}d} + N^{-1}\|\nabla u_0\|_2
\lesssim_u N^{-\frac{d-1}d},
$$
which shows that \eqref{assumption} holds with $s=\frac{d-1}d$ and $r=\tfrac12$.  Now using this as an input in Lemma~\ref{L:upgrade}
and invoking Lemma~\ref{L:tail} again, we get
$$
\left\|\varphi_{>1}\int_0^{\infty} P_N^+ e^{-i\tau\Delta}F\bigl(u(t+\tau)\bigr) \, d\tau \right\|_2
\lesssim_u N^{-\frac{d-1}d -\frac{d-1}{2d-1}},
$$
which proves \eqref{pe3} with $\eps:= \tfrac{d^2-3d+1}{d(2d-1)}$.  This finishes the proof of Proposition~\ref{P:piece est}.
\end{proof}

\begin{proof}[Proof of Theorem~\ref{T:kinetic}]
Fix $\eta>0$ and $t\in [0,\infty)$.  Let $\eta_1>0$ be a small constant and let $N_0>0$ be a large constant, both to be determined later.
To simplify notation, let $R:=2C(\eta_1) \langle N(t)^{-1}\rangle$, where $C:R^+\to \R^+$ denotes the compactness modulus function associated
to $u$ on $[0,\infty)$.

A simple application of the triangle inequality yields
\begin{align*}
\bigl\|\varphi_{>R} \nabla u(t)\bigr\|_2
\leq \bigl\|P_{\leq N_0}\varphi_{>R} \nabla u(t)\bigr\|_2 + \bigl\|P_{>N_0} \varphi_{>R} \nabla u(t)\bigr\|_2.
\end{align*}

Using the triangle inequality, Bernstein, and the mismatch estimates Lemma~\ref{L:mismatch_real} and Lemma~\ref{L:mismatch_fre} (with $m=2$),
we estimate the low frequencies as follows:
\begin{align*}
\bigl\|P_{\leq N_0}\varphi_{>R} \nabla u(t)\bigr\|_2
&\leq \bigl\|P_{\leq N_0}\varphi_{>R} \nabla P_{\leq 4 N_0} u(t)\bigr\|_2 + \bigl\|P_{\leq N_0}\varphi_{>R} \nabla P_{> 4 N_0} u(t)\bigr\|_2\\
&\lesssim \bigl\|\varphi_{>R} \nabla P_{\leq 4 N_0} \varphi_{\leq \frac R2} u(t)\bigr\|_2
    + \bigl\|\varphi_{>R} \nabla P_{\leq 4 N_0} \varphi_{> \frac R2}u(t)\bigr\|_2\\
&\quad + \sum_{N>4N_0}\bigl\|P_{\leq N_0}\varphi_{>R} \nabla P_N u(t)\bigr\|_2\\
&\lesssim_u N_0^{-1}R^{-2} + N_0\|\varphi_{> \frac R2}u(t)\bigr\|_2 + \sum_{N>4N_0} N^{-1}R^{-2}\\
&\lesssim_u N_0^{-1} + N_0\eta_1.
\end{align*}
In the last step we used the fact that $R\geq 2$.

To estimate the high frequencies, we use the Littlewood-Paley square function estimate together with Lemmas~\ref{L:mismatch_real}
and ~\ref{L:mismatch_fre} (with $m=2$) and Proposition~\ref{P:piece est}:
\begin{align*}
\bigl\|P_{>N_0} & \varphi_{>R} \nabla u(t)\bigr\|_2^2\\
&\lesssim \sum_{N>N_0} \bigl\|P_N \varphi_{>R} \nabla ( P_{<\frac N4} + P_{>4N} ) u(t)\bigr\|_2^2
    + \bigl\|P_N \varphi_{>R} \nabla P_{\frac N4\leq \cdot\leq 4N} u(t)\bigr\|_2^2\\
&\lesssim_u \sum_{N>N_0} N^{-2}R^{-4} + \sum_{N>\frac{N_0}4} \bigl\|\varphi_{>R} \nabla \tilde P_N \varphi_{\leq \frac R2} P_N u(t)\bigr\|_2^2\\
&\qquad \qquad \qquad \qquad  \, + \sum_{N>\frac{N_0}4} \bigl\|\varphi_{>R} \nabla \tilde P_N \varphi_{> \frac R2} P_N u(t)\bigr\|_2^2\\
&\lesssim_u N_0^{-2}R^{-4} + \sum_{N>\frac{N_0}4} N^{-2}R^{-4} + \sum_{N>\frac{N_0}4} N^2 \bigl( N^{-2-2\eps} + \|P_N u(0)\|_2^2 \bigr)\\
&\lesssim_u N_0^{-2} + N_0^{-2\eps} + \bigl\|\nabla P_{>\frac{N_0}4} u(0)\bigr\|_2^2.
\end{align*}

Putting everything together and choosing $N_0=N_0(\eta)$ sufficiently large, and $\eta_1=\eta_1(N_0)$ sufficiently small
finishes the proof of Theorem~\ref{T:kinetic}.
\end{proof}

%
%
%
%

\section{Proof of Theorem~\ref{T:main}}

As discussed in the introduction, we only have to establish the second part of Theorem~\ref{T:main}; the first part is a consequence
of the results in \cite{merle2}.  To this end, let $u$ be a global solution to \eqref{nls} with spherically symmetric initial data
$u(0)=u_0\in H^1_x$ and $M(u)=M(Q)$; assume further that $u$ blows up in the positive time direction in the sense of Definition~\ref{D:blowup}.
Then by Theorem~\ref{T:periodic}, $u$ is almost periodic modulo scaling on $[0,\infty)$ with frequency scale function $N$ and
compactness modulus function $C$.

By the variational characterization of the ground state (see Proposition~\ref{P:variational}), in order to establish the claim
it suffices to prove $E(u)=0$.  We will do so by contradiction.  By the sharp Gagliardo-Nirenberg inequality \eqref{sharp-gn},
we have $E(u)\geq 0$.  Assume therefore that $E(u)>0$; we will derive a contradiction using a truncated version of the virial identity
\eqref{true virial}.

Let $\psi$ be a smooth, radial cutoff such that
\begin{align*}
\psi(r) =
\begin{cases}
1, \quad r \le 1 \\
0, \quad r\geq 2
\end{cases}
\end{align*}
and define the virial function $V:[0,\infty)\to\R^+$ by
\begin{align*}
V_R(t) := \int_{\R^d} |x|^2 \psi\bigl(\tfrac{|x|}R\bigr) |u(t,x)|^2 \,dx,
\end{align*}
where $R$ denotes a radius to be chosen later.  As $u$ has finite mass,
\begin{align}\label{virial bdd}
\bigl|V_R(t)\bigr|\lesssim_u R^2.
\end{align}
A simple computation establishes
\begin{align}\label{virial}
\partial_{tt} V_R(t)
& = \partial_t \left( 2\Im \int \nabla \Bigl[|x|^2 \psi\bigl(\tfrac{|x|}R\bigr)\Bigr] \bar u(t,x) \nabla u(t,x)\, dx \right)\notag\\
&= 16 E(u(t)) +O\left( \frac 1 {R^2} \int_{|x| \ge R} |u(t,x)|^2 \, dx  \right) \\
& \quad + O\left( \int_{|x| \ge R} | \nabla u(t,x) |^2 \, dx+ \int_{|x|\ge R} |u(t,x)|^{\frac{2(d+2)}d} \,dx \right).\notag
\end{align}

In what follows, we distinguish two cases: either $N(t)$ is bounded from below or converges to zero along a subsequence.

\medskip

\noindent{\bf Case I:}  $\inf_{t\in[0,\infty)} N(t) >0 $.
Let $\eta>0$ be a small constant to be chosen later.  In this case, by Theorem~\ref{T:periodic} and Theorem~\ref{T:kinetic},
there exists $R=R(\eta)$ such that
\begin{align}\label{M+E small}
\bigl\|u(t) \bigr\|_{L_x^2(|x|\geq \frac R2)} + \bigl\|\nabla u(t) \bigr\|_{L_x^2(|x|\geq \frac R2)}\leq \eta
\end{align}
for all $t\in[0,\infty)$.  By the Gagliardo-Nirenberg inequality \eqref{sharp-gn}, this also implies localization of the potential energy;
indeed,
\begin{align}\label{pot small}
\bigl\| \varphi_{\ge \frac R2} u(t) \bigr\|_{\frac{2(d+2)}d}^{\frac{2(d+2)}d}
& \lsm \bigl\| \varphi_{\ge \frac R2} u(t) \bigr\|_2^{\frac 4d } \, \bigl\| \nabla \bigl(\varphi_{\ge \frac R2} u(t)\bigr) \bigr\|_2^2 \notag\\
& \lsm \eta^{\frac 4d} \, \Bigl( \bigl\| \varphi_{\ge \frac R2} \nabla u(t) \bigr\|_2^2 + \bigl\| (\nabla \varphi_{\ge \frac R2} ) u(t) \bigr\|_2^2 \Bigr)\notag\\
& \lsm \eta^{\frac 4d} \Bigl( \eta^2 + \tfrac 1 {R^2} \| u(t)\|_2^2 \Bigr) \notag\\
& \lsm_u \eta^{\frac{2(d+2)}d},
\end{align}
provided $R$ is chosen sufficiently large depending on $\eta$.

Inserting \eqref{M+E small} and \eqref{pot small} into \eqref{virial}, and choosing $\eta$ sufficiently small depending on $E(u_0)$
and $R$ sufficiently large depending on $\eta$, we obtain
$$
\partial_{tt}V_R(t)\geq 8 E(u)>0
$$
for all $t\in [0,\infty)$, thus contradicting \eqref{virial bdd}.

\medskip

\noindent{\bf Case II:} $\liminf_{t\to\infty}N(t)=0$.
Let $t_n\nearrow \infty$ such that $N(t_n)\searrow 0$.  Without loss of generality, we may assume
\begin{align}\label{min N}
N(t_n) = \min_{0\le t\le t_n} N(t).
\end{align}
First, we show that along this sequence the kinetic energy remains bounded.

\begin{lem}[Bounded kinetic energy]\label{L:bound k}
In Case II, we have $\|\nabla u(t_n)\|_2\lsm_u 1$ for all $n\ge 1$.
\end{lem}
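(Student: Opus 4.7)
The plan is to argue by contradiction: suppose that along a subsequence (still denoted $t_n$) one has $\|\nabla u(t_n)\|_2\to\infty$, and derive a contradiction by exhibiting two incompatible rescalings of $u(t_n)$. Intuitively, a divergent kinetic energy would force $u(t_n)$ to concentrate at a very short spatial scale, whereas the almost periodicity hypothesis with $N(t_n)\to 0$ forces it to spread over a very large scale; these two pictures will be shown to be geometrically incompatible.

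The first rescaling is designed to normalize the kinetic energy: set $\rho_n:=1/\|\nabla u(t_n)\|_2\to 0$ and $w_n(x):=\rho_n^{d/2}u(t_n,\rho_n x)$, so that $\|w_n\|_2=\|Q\|_2$, $\|\nabla w_n\|_2=1$, and the energy scales as $E(w_n)=\rho_n^2 E(u_0)\to 0$ (we are inside the outer contradiction argument where $E(u_0)>0$). Expanding the energy identity then gives $\|w_n\|_{2(d+2)/d}^{2(d+2)/d}\to (d+2)/d$, so the sharp Gagliardo--Nirenberg inequality (Proposition~\ref{P:variational}) becomes saturated in the limit. Since the $w_n$ are spherically symmetric and uniformly bounded in $H^1$, the Strauss radial compact embedding $H^1_{\mathrm{rad}}(\R^d)\hookrightarrow L^{2(d+2)/d}(\R^d)$ (valid because $2(d+2)/d<2d/(d-2)$ for $d\ge 3$) produces a subsequential strong limit in $L^{2(d+2)/d}$; combining this with weak-$H^1$ lower semicontinuity of the mass and kinetic energy and with the rigidity of the equality case in Proposition~\ref{P:variational} forces $w_n\to w$ strongly in $H^1$ with $w(x)=e^{i\theta_0}\mu^{d/2}Q(\mu x)$, $\mu=1/\|\nabla Q\|_2$ (radial symmetry eliminates any translation in the equality case).

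The second rescaling is that of almost periodicity: by Theorem~\ref{T:periodic}, the family $v_n(y):=N(t_n)^{-d/2}u(t_n,y/N(t_n))$ is precompact in $L^2$, so along a further subsequence $v_n\to v_\infty$ strongly in $L^2$ with $\|v_\infty\|_2=\|Q\|_2$. A direct change of variables identifies $v_n(y)=\lambda_n^{-d/2}w_n(y/\lambda_n)$ with $\lambda_n:=N(t_n)/\|\nabla u(t_n)\|_2$, and both factors tend to zero in Case~II, so $\lambda_n\to 0$. Writing $\langle v_n,g\rangle=\lambda_n^{d/2}\int w_n(z)\overline{g(\lambda_n z)}\,dz$ for $g\in C_c^\infty(\R^d)$, splitting the integral at a large but fixed radius, and using the $L^2$-tail decay of $w$ together with the strong $L^2$ convergence $w_n\to w$, one obtains $\langle v_n,g\rangle\to 0$. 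Hence $v_n\rightharpoonup 0$ weakly in $L^2$; combined with the strong $L^2$ convergence to $v_\infty$ this forces $v_\infty=0$, contradicting $\|v_\infty\|_2=\|Q\|_2>0$.

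The principal technical hurdle is the identification $w=e^{i\theta_0}\mu^{d/2}Q(\mu\cdot)$ in the first rescaling: it requires simultaneously using the radial Sobolev compactness (to upgrade weak $H^1$ convergence to strong convergence in $L^{2(d+2)/d}$), the saturation of sharp Gagliardo--Nirenberg (to pin down the limit as a genuine ground state, not merely some function with the correct $L^{2(d+2)/d}$ norm), and norm convergence to promote weak to strong $H^1$ convergence. Once this extraction is in hand, the scale-mismatch argument $\lambda_n\to 0$ versus $\|v_\infty\|_2=\|Q\|_2>0$ closes the contradiction essentially automatically.
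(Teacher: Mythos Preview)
Your argument is correct and follows essentially the same path as the paper: normalize the kinetic energy by rescaling, use the radial compact embedding together with the saturation of sharp Gagliardo--Nirenberg to identify the $H^1$-limit as a ground state, then exploit the scale mismatch with the almost-periodic rescaling $N(t_n)^{-d/2}u(t_n,\cdot/N(t_n))$ to force a weak-$L^2$ limit of zero in a precompact family of mass $M(Q)$. The only differences are cosmetic (you normalize $\|\nabla w_n\|_2=1$ rather than $\|\nabla Q\|_2$, and your $\lambda_n\to 0$ is the reciprocal of the paper's $\lambda_n\to\infty$); also note that the parenthetical $E(u_0)>0$ is not actually needed, since $E(w_n)=\rho_n^2 E(u_0)\to 0$ follows from $\rho_n\to 0$ and the finiteness of $E(u_0)$ alone.
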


\begin{proof}
We argue by contradiction. Assume there is a subsequence (which we still denote by $t_n$) such that  $\| \nabla u(t_n) \|_2 \to \infty$
as $n\to \infty$.  Let us define the rescaled functions
\begin{align*}
v_n(x) := \bigl(\tfrac{ \|\nabla Q\|_2}{\|\nabla u(t_n) \|_2} \bigr)^{\frac d2} \, u\bigl(t_n, \tfrac{\|\nabla Q\|_2}{\| \nabla u(t_n)\|_2} x\bigr).
\end{align*}
Clearly,
\begin{equation}\label{equal norm}
\|v_n\|_2 = \|Q\|_2 \quad \text{and} \quad \| \nabla v_n \|_2 = \| \nabla Q\|_2.
\end{equation}
Also,
$$
E(v_n)=\tfrac{\|\nabla Q\|_2^2}{\|\nabla u(t_n)\|_2^2}\,  E(u(t_n))\to 0 \quad \text{as} \quad n\to\infty.
$$
In particular, this implies
\begin{align}\label{pot E convg}
\|v_n\|_{\frac{2(d+2)}d}^{\frac{2(d+2)}d}\to \tfrac {d+2}d\|\nabla Q\|_2^2.
\end{align}

By \eqref{equal norm}, there exists $V\in H_x^1(\R^d)$ such that $v_n$ converge weakly to $V$ in $H_x^1(\R^d)$.  Moreover,
\begin{equation}\label{small}
\| V\|_2\le \liminf _{n\to\infty} \|v_n\|_2=\| Q\|_2 \quad \text{and} \quad \|\nabla V\|_2\le \liminf _{n\to\infty}\|\nabla v_n\|_2=\|\nabla Q\|_2.
\end{equation}
As $u$ is spherically symmetric, it follows that $v_n$ are spherically symmetric; using the compact embedding
$H_{rad}^1(\R^d)\hookrightarrow L_x^{2(d+2)/d}(\R^d)$, we obtain
$$
v_n\to V \text{ strongly in } L_x^{\frac{2(d+2)}d}(\R^d).
$$
Combining this with \eqref{pot E convg}, we get
$$
\|V\|_{\frac{2(d+2)}d}^{\frac{2(d+2)}d}=\tfrac {d+2}d\|\nabla Q\|_2^2.
$$
An application of the sharp Gagliardo-Nirenberg inequality \eqref{sharp-gn} for $V$ together with \eqref{small},
yields $\|\nabla V\|_2= \|\nabla Q\|_2$ and $\|V\|_2=\|Q\|_2$.  As a consequence, $v_n$ converge to $V$ strongly in $H^1_x(\R^d)$.

Collecting the properties of $V$, we find
$$
\|V\|_2=\|Q\|_2, \quad \|\nabla V\|_2=\|\nabla Q\|_2, \quad E(V)=0.
$$
Using the variational characterization of the ground state (see Proposition~\ref{P:variational}) we deduce that $V(x)=e^{i\theta_0}Q(x)$
for some $\theta_0\in [0,2\pi)$.  Thus,
\begin{align} \label{eq_vn_compact}
 \| v_n - e^{i\theta_0} Q \|_{H_x^1} \to 0 \quad \text{as } n\to \infty.
\end{align}

Let $ \lambda_n :=\frac {\| \nabla u(t_n) \|_2} {\| \nabla Q\|_2 N(t_n)}$.  Note that by assumption, $\lambda_n\nearrow\infty$ as $n\to \infty$.
Hence, by \eqref{eq_vn_compact} the rescaled functions $\lambda_n^{\frac d2} v_n(\lambda_n x)$ converge to zero weakly in $L_x^2(\R^d)$.
Decoding what this means for $u$, we find that
$$
N(t_n)^{-\frac d2} u\bigl(t_n, \tfrac 1{N(t_n)} x \bigr)\to 0 \quad \text{weakly in $L_x^2(\R^d)$}.
$$
This clearly contradicts the facts that $M(u)=M(Q)$ and that, by Theorem~\ref{T:periodic},
$\{N(t_n)^{-\frac d2} u(t_n, N(t_n)^{-1} \cdot)\} $ is precompact in $L^2_x(\mathbb R^d)$.

This completes the proof of the lemma.
\end{proof}

\begin{rem}
In the proof of Lemma~\ref{L:bound k}, we used the compactness of the embedding $H_{rad}^1(\R^d)\hookrightarrow
L_x^{2(d+2)/d}(\R^d)$, thus relying on the spherical symmetry of the solution $u$.  Using instead the concentration compactness
result \cite[Theorem~1]{hmidi-keraani}, one can prove Lemma~\ref{L:bound k} in the non-radial case.  Indeed, the whole argument
used to derive \eqref{eq_vn_compact}, which is really just Weinstein's result \eqref{weinstein's result}, is reminiscent of the
techniques used in \cite{hmidi-keraani}.  However, to pass beyond this and prove that finite-time minimal-mass blowup solutions
are the pseudo-conformal ground state up to the symmetries of the equation, the arguments used in \cite{hmidi-keraani, merle1, merle2}
rely heavily on the finiteness of the blowup time.
\end{rem}

Returning to the proof of Theorem~\ref{T:main}, let $\eta>0$ be a small constant to be chosen later.
Using Theorem~\ref{T:periodic} and Theorem~\ref{T:kinetic} and recalling \eqref{min N},
\begin{align*}
\bigl\|u(t) \bigr\|_{L_x^2(|x|\geq \frac {R_n}2)} + \bigl\|\nabla u(t) \bigr\|_{L_x^2(|x|\geq \frac {R_n}2)}\leq \eta,
\end{align*}
for $R_n :=\frac{C(\eta)}{N(t_n)}$ and all $t\in[0, t_n]$.  Arguing as in \eqref{pot small}, this implies
$$
\bigl\| \varphi_{\ge \frac{R_n}2} u(t) \bigr\|_{\frac{2(d+2)}d}^{\frac{2(d+2)}d}
\lsm \eta^{\frac 4d} \Bigl( \eta^2 + \tfrac{N(t_n)^2}{C(\eta)^2} \| u(t)\|_2^2 \Bigr)
\lsm_u \eta^{\frac{2(d+2)}d},
$$
for all $t\in[0, t_n]$.  Thus, by \eqref{virial}, choosing $\eta$ sufficiently small depending on $E(u_0)$,
\begin{align}\label{virial bdd 2}
\partial_{tt}V_{R_n}(t)\geq 8 E(u)>0
\end{align}
for all $t\in [0,t_n]$.

On the other hand, by Lemma~\ref{L:bound k},
$$
\bigl| \partial_t V_{R_n} (0)\bigr| + \bigl| \partial_t V_{R_n} (t_n)\bigr| \lesssim_u R_n.
$$
Thus, using the Fundamental Theorem of Calculus and \eqref{virial bdd 2}, followed by Remark~\ref{R:bdd N}, we obtain
$$
E(u_0) t_n \lesssim_u R_n \lesssim_u N(t_n)^{-1}\lesssim_u t_n^{1/2}.
$$
Letting $n\to \infty$, we reach a contradiction.

This finishes the proof of Theorem~\ref{T:main}.
\qed

\end{document}